\documentclass[oneside,a4paper,12pt,notitlepage]{article}
\usepackage[left=0.8in, right=0.8in, top=1.5in, bottom=1.5in]{geometry}
\usepackage{amsmath, amssymb, amsthm, mathtools}
\usepackage{esint}
\usepackage[colorlinks=true, linkcolor=blue, citecolor=red]{hyperref}
\usepackage{color}
\newtheorem{theorem}{Theorem}[section]
\newtheorem{proposition}[theorem]{Proposition}
\newtheorem{lemma}[theorem]{Lemma}
\newtheorem{corollary}[theorem]{Corollary}
\theoremstyle{definition}

\newtheorem{remark}[theorem]{Remark}

\newcommand{\R}{\mathbb{R}}
\newcommand{\BV}{\text{BV}}

\DeclareMathOperator{\divg}{div}

\newcommand{\W}{\mathcal{W}}

\newcommand{\tr}{{\rm tr}}

\title{Anisotropic gradient rearrangement of BV functions and applications}
\author{Gloria Paoli, Yabo Yang*}
\date{}

\newcommand{\Addresses}{{
\bigskip 
  \footnotesize 
  \textit{E-mail address}, Y.~Yang (corresponding author): \texttt{Yabo\_Yang0927@outlook.com} 
  
   \medskip 

\textsc{School of Mathematics and Statistics, Beijing Institute of Technology, Beijing 100081, P. R. China}
   
     \textit{E-mail address}, G.~Paoli: \texttt{gloria.paoli@unina.it} 
   \medskip 
   
 \textsc{Dipartimento di Matematica e Applicazioni ``R. Caccioppoli'', Universit\`a degli studi di Napoli Federico II, Via Cintia, Complesso Universitario Monte S. Angelo, 80126 Napoli, Italy.}

 \par\nopagebreak 

}}

\begin{document}

\maketitle
 \begin{abstract}
In this paper, we introduce a symmetrization technique for the distributional gradient of a function of bounded variation  in the anisotropic setting. This generalizes the result obtained in the Euclidean case in \cite{AGNT24} by separating the absolutely continuous part of the anisotropic gradient from its singular part. Our main result is  an $L^1$ comparison between the function and its anisotropic symmetrization. Moreover, as an application,  we 
 derive isoperimetric inequalities for some geometric functionals related to the torsional rigidity.

\textsc{Keywords:} Anisotropy, BV functions, rearrangement of gradient \\
\textsc{MSC 2020:}  26A45, 35A23, 35B45; 35J60
\end{abstract}


\section{Introduction}

Symmetrization and rearrangement methods are a classical tool to compare solutions of PDEs defined on a general domain $\Omega\subset\R^n$
with suitably symmetric solutions defined on a ball (or, more generally, on an isoperimetric minimizer) having the same volume.
Starting from the pioneering work of Talenti \cite{Talenti76}, such techniques yield sharp comparison principles and extremal inequalities
for elliptic problems, and they are tightly connected with isoperimetric-type inequalities. The references for the study of the symmetrization of the Laplace operator is very wide, see e.g. \cite{AFLT97,lions1981solutions,lions1982generalized,T2,T4,T3} for the Dirichlet boundary conditions,  \cite{ANT, BG, MP23, MP24} for Robin boundary conditions, and \cite{CY24, CY25,Sannipoli22} for the anisotropic setting. 

The starting point of this work is the following comparison result, concerning solutions to Hamilton-Jacobi equations with Dirichlet boundary conditions, proved in \cite{giarrusso-Nunziante1984}.

\begin{theorem}[Giarrusso-Nunziante]
  Let $p\geq 1$, $f: \Omega \rightarrow \R$, $F:\R^n\rightarrow \R$ be measurable non-negative functions and let $K:[0,+\infty)\rightarrow [0,+\infty)$ be a strictly increasing  function such that
    \[
0 \leq K(|y|) \leq F(y), \quad \forall y \in \mathbb{R}^n, \quad \quad K^{-1}(f) \in L^p(\Omega)
\]
    Let $u \in W^{1,p}_{0}(\Omega)$ be a function that satisfies
    \begin{align}
\label{hamilton_jacobi}
    \begin{cases}
        F(\nabla u) = f(x) & \text{in } \Omega, \\
        u = 0 & \text{on } \partial \Omega,
    \end{cases}
\end{align}
and let $v$ be the unique decreasing spherically symmetric solution to
\begin{align}
\label{hamilton_jacoby_sym}
    \begin{cases}
        K(|\nabla v|) = f_{\#}(x) & \text{in } \Omega^{\#}, \\
        v = 0 & \text{on } \partial \Omega^{\sharp},
    \end{cases}
\end{align}
where $\Omega^{\sharp}$ is the ball centered at the origin with the same measure as $\Omega$, and $f_{\#}$ is the increasing rearrangement of $f$ (see Section \ref{sec:preliminaries} for the definition). Then, we have the following
\begin{equation*}
\|u\|_{L^1(\Omega)} \le \|v\|_{L^1(\Omega^{\sharp})}.
\end{equation*}
\end{theorem}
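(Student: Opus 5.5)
The plan is to argue through the distribution function of $|u|$ and the Fleming--Rishel coarea formula, combined with the isoperimetric inequality. By replacing $u$ with $|u|$ we may assume $u\ge 0$. This is allowed because $|u|\in W^{1,p}_0(\Omega)$ and $|\nabla |u|| = |\nabla u|$ a.e. Set $\mu(t)=|\{u>t\}|$ for $t\ge0$; $\mu$ is decreasing and right-continuous. The goal is a differential inequality for $\mu$ that is saturated by $v$, followed by a comparison.

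First, from $K(|\nabla u|)\le F(\nabla u)=f$ and the fact that $K$ is strictly increasing, we get $|\nabla u|\le K^{-1}(f(x))$ a.e. in $\Omega$. For a.e. $t$ the coarea formula gives
\[
P(\{u>t\}) = \int_{\{u=t\}} d\mathcal H^{n-1} \le \Big(-\frac{d}{dt}\int_{\{u>t\}}|\nabla u|\,dx\Big)^{1/2}\Big(-\mu'(t)\Big)^{1/2},
\]
by Cauchy--Schwarz. A cleaner route is to write, for $h>0$,
\[
\int_{\{t<u\le t+h\}} |\nabla u|\,dx \le \int_{\{t<u\le t+h\}} K^{-1}(f)\,dx \le \int_{\mu(t+h)}^{\mu(t)} K^{-1}(f_\#(s))\,ds .
\]
The last step uses the Hardy--Littlewood-type fact that the integral of $g=K^{-1}\circ f$ over a set of measure $m$ is at most the integral of its \emph{decreasing} rearrangement over an interval of length $m$. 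Here we must be careful with orientation. I expect the natural bound to be the one matching $v$: since $v$ is radially decreasing with $K(|\nabla v|)=f_\#$, the largest slopes occur near the boundary, where the level sets are large. So we should bound via the increasing rearrangement placed at $s\in(\mu(t+h),\mu(t))$, measured in the variable $s=|\{u>t\}|$, which equals the volume of the inner ball. I would check this orientation carefully, because it is the crux: the pointwise Hardy--Littlewood bound only gives the decreasing rearrangement. The argument therefore has to be run in integrated form, as below.

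Divide by $h$ and use the isoperimetric inequality $P(\{u>t\})\ge n\omega_n^{1/n}\mu(t)^{(n-1)/n}$ together with the coarea identity $\frac{d}{dt}\int_{\{u>t\}}|\nabla u| = -P(\{u>t\})$ a.e. Also use $\mu'(t)\le -\int_{\{u=t\}}|\nabla u|^{-1}$. Combined with the slope bound, this yields an inequality of the form
\[
1 \le \frac{-\mu'(t)}{n^2\omega_n^{2/n}\mu(t)^{2-2/n}} \cdot \text{(average of } K^{-1}(f) \text{ on } \{u=t\}).
\]
For $v$ the same quantities satisfy equality, with $K^{-1}(f_\#)$ evaluated at the volume $\mu_v(t)$. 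Passing to the inverse functions $u^*(s)$ and $v^*(s)$ (decreasing rearrangements in $s\in(0,|\Omega|)$), the inequality becomes
\[
-\frac{du^*}{ds}(s) \le \frac{\Phi(s)}{n\omega_n^{1/n}s^{1-1/n}},
\]
where $\Phi$ is some rearrangement-controlled average of $K^{-1}(f)$. For $v^*$ we have equality with $\Phi(s)=K^{-1}(f_\#(s))$.

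The final step, which I expect to be the main obstacle, is to show
\[
\int_0^{|\Omega|} u^*(s)\,ds \le \int_0^{|\Omega|} v^*(s)\,ds .
\]
Integrating by parts with $u^*(|\Omega|)=0$ gives
\[
\int_0^{|\Omega|} u^* \,ds = \int_0^{|\Omega|} s\,(-u^{*\prime}(s))\,ds \le \int_0^{|\Omega|} \frac{s^{1/n}}{n\omega_n^{1/n}}\, g_u(s)\,ds,
\]
where $g_u$ is the density of the slope bound along level sets. The weight $s^{1/n}$ is increasing in $s$. Hence, among all ways of distributing the values of $K^{-1}(f)$ over the levels, the integral is maximized when the largest values sit at the largest $s$, i.e. when we use the increasing rearrangement $K^{-1}(f_\#)=K^{-1}(f)_\#$. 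This is exactly the configuration $v$ realizes. To make this rigorous, I would avoid pointwise-in-$t$ rearrangement. Instead I would write $\int_\Omega u = \int_\Omega \frac{\mu(u(x))^{1/n}}{n\omega_n^{1/n}}\,|\nabla u|\cdot(\ldots)$ via coarea, bound $|\nabla u|$ by $K^{-1}(f)$, and apply the Hardy--Littlewood inequality $\int_\Omega a\,b\le\int_0^{|\Omega|} a^*b^*$ with $a=\mu(u)^{1/n}$ (whose rearrangement is governed by $s^{1/n}$) and $b=K^{-1}(f)$. This gives precisely $\int v$. Handling the plateaus of $u$ (where $\mu$ jumps), where $\nabla u=0$ a.e., requires a short separate remark. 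The integrability of these quantities uses $K^{-1}(f)\in L^p$.
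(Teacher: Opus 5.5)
The paper does not prove this theorem; it is quoted from Giarrusso--Nunziante as background. The natural comparison is with the proof of the main $L^1$ comparison theorem in Section~3, which contains it. For $H=|\cdot|$ and $u\in W^{1,p}_0(\Omega)$ one has $D^su=0$, so $\|u\|_{L^1(\Omega)}\le\|u^\star\|_{L^1(\Omega^\star)}$ with $|\nabla u^\star|=(|\nabla u|)_\sharp$. Since $|\nabla u|\le K^{-1}(f)$ gives $(|\nabla u|)_*\le K^{-1}(f_*)$, the explicit formula of Proposition~\ref{prop:existence} then yields $u^\star\le v$.

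Your proposal is correct once it is reduced to its last paragraph, and that route is genuinely different from, and shorter than, the paper's. Discard the middle part. The displayed bound $\int_{\{t<u\le t+h\}}K^{-1}(f)\,dx\le\int_{\mu(t+h)}^{\mu(t)}K^{-1}(f_\#(s))\,ds$ is false in general. Likewise, no pointwise-in-$s$ inequality with $\Phi=K^{-1}(f_\#)$ can hold, because $u^*\le v^*$ fails in general (for $n\ge 2$, put the large slopes of a radial $u$ near the centre). Only the integrated comparison is true. The argument that works uses the layer-cake formula, the isoperimetric inequality and the weighted coarea formula:
\[
\int_\Omega u\,dx=\int_0^\infty\mu(t)\,dt\le\frac{1}{n\omega_n^{1/n}}\int_0^\infty\mu(t)^{1/n}P(\{u>t\})\,dt=\frac{1}{n\omega_n^{1/n}}\int_\Omega\mu(u(x))^{1/n}\,|\nabla u(x)|\,dx .
\]
So your unspecified factor ``$(\ldots)$'' should simply be replaced by this inequality. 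The weighted coarea identity for the decreasing weight $\mu^{1/n}$ follows from the layer-cake representation of the weight and the coarea formula for the truncations $\min(u,\tau)$. Then bound $|\nabla u|\le b:=K^{-1}(f)$ and apply Hardy--Littlewood to $a:=\mu(u)^{1/n}$ and $b$.

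Your phrase ``governed by $s^{1/n}$'' must be made precise. By right-continuity of $\mu$, $\{\mu(u)>\lambda\}=\{u<u^*(\lambda)\}$, which has measure at most $|\Omega|-\lambda$. Hence $a^*(s)\le(|\Omega|-s)^{1/n}$ and
\[
\int_\Omega a\,b\,dx\le\int_0^{|\Omega|}(|\Omega|-s)^{1/n}\,b^*(s)\,ds=\int_0^{|\Omega|}\tau^{1/n}\,b_*(\tau)\,d\tau ,
\]
with $b_*=K^{-1}(f_*)$. Applying Fubini to $v^*(s)=\int_s^{|\Omega|}K^{-1}(f_*(t))\,\bigl(n\omega_n^{1/n}t^{1-1/n}\bigr)^{-1}\,dt$ shows that this quantity divided by $n\omega_n^{1/n}$ equals $\|v\|_{L^1(\Omega^\sharp)}$. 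Since only an upper bound on $a^*$ is needed, plateaus of $u$ require no separate remark, and only $b\in L^1$ is used.

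The paper's route works with $G(s)$ and the measure $\sigma_H$ and derives the pointwise bound \eqref{eq:u_pointwise_estimate} through the one-dimensional chain rule for $\mathcal K(u^*)$. It then needs the approximation Lemma~\ref{lem:approximation} to replace the density $F_{H,1}$ of $s\mapsto\int_{\{u>u^*(s)\}}g$ by functions equimeasurable with $g$, before Hardy--Littlewood can be applied against $\tau^{1/n}$. Your version performs the Hardy--Littlewood step directly in $\Omega$ with the weight $\mu(u(x))^{1/n}$, so it needs neither the approximation lemma nor the chain rule.

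Your version also carries over to the anisotropic BV setting of the paper. Split the coarea measure into the part carried by a Lebesgue-null set supporting $D^su$ and the rest. The singular contribution is then at most $\mu(0)^{1/n}|D^su|_H(\R^n)\le|\Omega|^{1/n}|D^su|_H(\R^n)$, which reproduces \eqref{eq:L1_upper_bound}. In exchange, the paper's route yields the level-by-level information \eqref{eq:u_pointwise_estimate} on $u^*$ in terms of $\sigma_H$.
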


We refer to \cite{betta1996uniqueness, mercaldo_remark} for the rigidity results,  and to \cite{ALT89, cianchi1996lq, ferone_posteraro} for $L^q$ comparison. Moreover, we refer to \cite{gentile_amato} for the comparison of solution to Hamilton-Jacobi equations with non-zero boundary trace, and to \cite{amato_barbato} for quantitative type results.


A more delicate question concerns rearrangements of the gradient itself, and the extension of these ideas to the space $\BV$,
where the distributional derivative splits into an absolutely continuous part and a singular part.
Sharp results in this direction go back to  \cite{CF}, where  the interplay between Schwarz symmetrization and the
 structure of $Du$ for $u\in\BV(\R^n)$ is studied, proving a  P\'olya-Szeg\"o inequality for $BV$ functions. Recently, in  \cite{AGNT24}, the authors introduced a new symmetrization
procedure for $\BV$ functions which preserves the total variation while separating the absolutely continuous and singular parts
of the derivative: the absolutely continuous part is rearranged in a precise way, and the singular part is concentrated on the boundary of the
symmetrized domain. The main result  in  \cite{AGNT24} is an $L^1$-comparison between $u$ and its gradient symmetrization. 

The purpose of this paper is to extend the construction and the comparison principle of \cite{AGNT24} to the anisotropic setting.
Anisotropic surface energies naturally arise in several models (e.g.\ crystalline surface tension and interfacial energies), where the cost of a
hypersurface depends on the direction of its normal. Anisotropy is encoded by a convex, positively $1$-homogeneous function
$H:\R^n\to[0,+\infty)$, with polar function $H^\circ$. In this framework, the Euclidean perimeter is replaced by the anisotropic perimeter
\[
P_H(E)=\int_{\partial^*E} H(\nu_E)\,d\mathcal H^{n-1},
\]
and the role of the Euclidean ball is played by the Wulff ball, which minimizes $P_H$ at fixed volume.

Our main result provides an anisotropic counterpart of the gradient symmetrization in \cite{AGNT24}.
Given a non-negative function $u\in \BV_0(\Omega)$, we construct a function $u^\star$ on the Wulff
ball $\Omega^\star$  with the same volume of $\Omega$, in such a way that:
\begin{itemize}
\item the distribution of the anisotropic absolutely continuous energy density
\(H(\nabla^a u)\) matches that of \(H(\nabla u^\star)\), namely
\[
\big(H(\nabla^a u)\big)^*
=
\big(H(\nabla u^\star)\big)^*;
\]
here \(f^*\) denotes the decreasing rearrangement of a measurable function \(f\)
(see Section~\ref{prel:rearrangements});
\item the singular part of the distributional derivative $Du$ is not lost under symmetrization:
its contribution is encoded into the boundary trace of $u^\star$ on $\partial\Omega^\star$, so that
$D^s u$ appears as an explicit boundary term in the comparison problem for $u^\star$;
\item and the following $L^1$ comparison holds:
\[
\|u\|_{L^1(\Omega)}\le \|u^\star\|_{L^1(\Omega^\star)}.
\]
\end{itemize}
For the exact definition of $u^{\star}$ see \eqref{eq:ustar_problem}.
In particular, this procedure not only symmetrizes the absolutely continuous energy, but also 
keeps track of $D^s u$ through a boundary datum on $\partial\Omega^\star$, making the $\BV$ nature of
the argument fully explicit.
Therefore, our main theorem can be stated as follows.
We use the following notation:
\[
BV_{0}(\Omega) = \{u \in \BV(\mathbb{R}^n): u \equiv 0 \,\,\mathrm{in}\,\, \mathbb{R}^n\backslash\Omega\}.
\]
\begin{theorem}\label{thm:main_comparison}
Let $\Omega\subset\R^n$ be an open bounded set with finite perimeter and let $\Omega^{\star}$ be the Wulff ball centered at the origin with $|\Omega^\star|=|\Omega|$.
Assume that $u$ is a non-negative function in $\BV_0(\Omega)$ and let $u^\star$ be defined by \eqref{eq:ustar_problem}.
Then
\begin{equation}\label{eq:main_inequality}
\|u\|_{L^1(\Omega)}\le \|u^{\star}\|_{L^1(\Omega^\star)} .
\end{equation}
\end{theorem}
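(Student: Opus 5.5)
The plan is a Talenti-type argument on the superlevel sets of $u$, combined with a one-dimensional comparison lemma. I take \eqref{eq:ustar_problem} to define $u^\star$ as the $H^\circ$-radially decreasing function on $\Omega^\star$ such that:
\begin{itemize}
\item $H(\nabla u^\star)(x)$ is the rearrangement of $g:=H(\nabla^a u)$ arranged so that $\big(H(\nabla u^\star)\big)^*=g^*$;
\item its boundary trace is the constant $m:=|D^s u|_H(\R^n)/P_H(\Omega^\star)$ on $\partial\Omega^\star$, where $|D^s u|_H$ denotes the anisotropic total variation of the singular part.
\end{itemize}
I expect the larger values of $g^*$ to sit at larger $H^\circ(x)$, which is the arrangement that maximizes the $L^1$ norm, but I have not pinned this down. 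Since $u,u^\star\ge 0$, I write
\[
\|u\|_{L^1(\Omega)}=\int_0^\infty \mu(t)\,dt,\qquad \|u^\star\|_{L^1(\Omega^\star)}=\int_0^\infty \nu(t)\,dt ,
\]
with $\mu(t)=|\{u>t\}|$ and $\nu(t)=|\{u^\star>t\}|$. The task is therefore to compare the distribution functions $\mu$ and $\nu$.

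\textbf{Step 1 (integral inequality for $\mu$).} I apply the anisotropic coarea formula for BV functions to $E_s=\{u>s\}$:
\[
\int_{\{u>t\}} H(dDu)=\int_t^\infty P_H(E_s)\,ds .
\]
The left side splits as $\int_{\{u>t\}} g\,dx+|D^s u|_H(\{u>t\})$. For the absolutely continuous part, the Hardy–Littlewood inequality gives $\int_{\{u>t\}} g\le \int_0^{\mu(t)} g^*(r)\,dr$. The singular part is at most $|D^s u|_H(\R^n)$. For the perimeter, the anisotropic isoperimetric inequality (Wulff ball minimality) gives $P_H(E_s)\ge n\kappa_n^{1/n}\mu(s)^{(n-1)/n}$, where $\kappa_n$ is the volume of the unit Wulff ball. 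Combining these yields
\[
\int_t^\infty n\kappa_n^{1/n}\mu(s)^{\frac{n-1}{n}}\,ds\;\le\;\int_0^{\mu(t)}g^*(r)\,dr+|D^su|_H(\R^n)\qquad\text{for every }t\ge0 .
\]

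\textbf{Step 2 (equality structure for $u^\star$).} Because the level sets of $u^\star$ are Wulff balls and $H(\nabla u^\star)$ is constant on them, each inequality in Step 1 becomes an equality for $\nu$. The only difference is the jump $m$ at $\partial\Omega^\star$:
\begin{itemize}
\item for $t<m$, $\nu(t)=|\Omega|$, and the jump contributes exactly $m P_H(\Omega^\star)=|D^s u|_H(\R^n)$;
\item for $t\ge m$, $\nu$ satisfies the same relation without the singular term.
\end{itemize}
I would verify this directly from the explicit formula
\[
u^\star(x)=m+\int_{H^\circ(x)}^{R} g^*\big(\kappa_n\rho^n\big)\,d\rho ,
\]
where $R$ is the radius of $\Omega^\star$.

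\textbf{Step 3 (comparison lemma), the main obstacle.} From the two integral relations I must deduce $\int_0^\infty\mu\le\int_0^\infty\nu$. Because of the constant singular term, this does not follow pointwise. My first attempt is to pass to inverse functions: set $s=\mu(t)$ and work with $u^*(s)$ and the corresponding $(u^\star)^*(s)$. Step 1 then becomes an inequality on $u^*(s)-u^*(|\Omega|^-)$ together with the total variation, which matches the structure of $u^\star$ exactly. The plan is to use $\|u\|_{L^1(\Omega)}=\int_0^{|\Omega|}u^*(s)\,ds$, integrate by parts, and bound $u^*(s)$ by
\[
\int_s^{|\Omega|}\frac{-du^*}{n\kappa_n^{1/n}\sigma^{(n-1)/n}} .
\]
The measure $-du^*$ will be controlled through the cumulative bound of Step 1 by a Hardy-type/Abel-summation argument. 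The weight $\sigma^{-(n-1)/n}$ is decreasing, so mass is most effective when placed at the largest $\sigma$, i.e.\ at the boundary. The jump mass $|D^su|_H$ placed at $\sigma=|\Omega|$, together with $g^*$ distributed as in $u^\star$, should then be the extremal configuration, which gives \eqref{eq:main_inequality}. Making this Abel-summation step rigorous for a general BV distribution function, which may have jumps and flat parts, is where I expect the real work to lie.
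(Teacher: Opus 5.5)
Your Step 1 inequality is true (reading its left side as $|D(u-t)_+|_H(\R^n)$), but it is too weak to give the theorem. So Step 3 cannot be completed from the relations you have, and the obstruction is not the singular term. In the variable $s=\mu(t)$, Step 1 only bounds the mass of $\lambda:=n\kappa_n^{1/n}s^{1-\frac1n}(-Du^*)$ on initial intervals $(0,s]$, by $\int_0^s g^*+|D^su|_H(\R^n)$. On the other hand, $\|u\|_{L^1(\Omega)}=\frac{1}{n\kappa_n^{1/n}}\int\tau^{1/n}\,d\lambda(\tau)$ carries an increasing weight. Such a constraint allows \emph{all} the mass, absolutely continuous part included, to sit at $s=|\Omega|$. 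Concretely, $\mu=|\Omega|\chi_{[0,T)}$ with $T\,P_H(\Omega^\star)=\|g\|_{L^1(\Omega)}+|D^su|_H(\R^n)$ satisfies your Step 1 for every $t\ge0$, yet
\begin{align*}
\int_0^\infty\mu\,dt&=\frac{|\Omega|^{\frac1n}}{n\kappa_n^{1/n}}\Big(\|g\|_{L^1(\Omega)}+|D^su|_H(\R^n)\Big)\\
&>\frac{1}{n\kappa_n^{1/n}}\Big(\int_0^{|\Omega|}\tau^{\frac1n}g_*(\tau)\,d\tau+|\Omega|^{\frac1n}|D^su|_H(\R^n)\Big)=\|u^\star\|_{L^1(\Omega^\star)}
\end{align*}
as soon as $g\not\equiv0$, even when $D^su=0$.

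Step 2 is also inconsistent. With the increasing arrangement $H(\nabla u^\star)=g_\sharp$ (which is the right one) one has $\int_{\{u^\star>t\}}H(\nabla u^\star)\,dx=\int_0^{\nu(t)}g_*$, so Hardy--Littlewood is in general strict for $u^\star$. If instead you arranged $g$ decreasingly to force equality, the $L^1$ comparison would be false. For instance, take $\Omega=\Omega^\star$, $D^su=0$, and $u$ already $H^\circ$-radial with $H(\nabla u)=g_\sharp$ and $g$ nonconstant.

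What is missing is that the rearrangement inequality must be applied to the \emph{outer} part of the distribution, so that it produces $g_*$ against the increasing weight $\tau^{1/n}$. The paper does this by keeping the exact quantity $G(s)=|D(u-u^*(s))_+|_H(\R^n)$, in three steps:
\begin{itemize}
\item its increments give $\lambda\le\sigma_H=F_{H,1}\,ds+\sigma_{H,2}$ as measures;
\item integration by parts gives $\|u\|_{L^1(\Omega)}\le\frac{1}{n\kappa_n^{1/n}}\int\tau^{1/n}\,d\sigma_H$;
\item only then do the approximation lemma and Hardy--Littlewood yield $\int\tau^{1/n}F_{H,1}\le\int\tau^{1/n}g_*$, while $\int\tau^{1/n}d\sigma_{H,2}\le|\Omega|^{1/n}|D^su|_H(\R^n)$.
\end{itemize}
Alternatively, your Talenti-type scheme closes if you work with the lower part $\min(u,t)$ instead of $(u-t)_+$. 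Coarea, isoperimetry, and Hardy--Littlewood on $\{u<t\}\cap\Omega$ (a set of measure at most $|\Omega|-\mu(t)$) give
\[
\Psi(t):=\int_0^t n\kappa_n^{1/n}\mu(\tau)^{1-\frac1n}\,d\tau\le|D\min(u,t)|_H(\R^n)\le\int_{\mu(t)}^{|\Omega|}g_*(r)\,dr+|D^su|_H(\R^n).
\]
Letting $t\uparrow u^*(r)$, where $\mu(t)>r$, this gives $\Psi(u^*(r))\le\int_r^{|\Omega|}g_*+|D^su|_H(\R^n)$. The layer-cake identity $\int_0^\infty\mu\,dt=\frac{1}{n\kappa_n^{1/n}}\int_0^{|\Omega|}\frac1n r^{\frac1n-1}\Psi(u^*(r))\,dr$ then yields exactly $\|u\|_{L^1(\Omega)}\le\|u^\star\|_{L^1(\Omega^\star)}$, with no approximation lemma needed.
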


Finally, we illustrate how this anisotropic symmetrization principle can be employed in comparison arguments for variational problems and PDEs,
where the natural geometry is governed by $H$ and the extremal domain is a Wulff shape. In particular, functionals involving anisotropic
Dirichlet energies and boundary contributions can be handled by combining the main theorem with the interpretation of boundary terms
through the singular part of the distributional derivative of suitable zero extensions.

\medskip

\noindent\textbf{Plan of the paper.}
In Section~\ref{sec:preliminaries} we recall basic facts on the anisotropic setting, $\BV$ functions and rearrengements.
In Section~\ref{sec:main_result} we define the anisotropic gradient symmetrization and prove the $L^1$ comparison theorem.
In the final section, we discuss applications to variational problems in the anisotropic framework.

\section{Preliminaries}
\label{sec:preliminaries}

In this section, we fix the notation and recall some basic definitions and properties concerning anisotropic norms (also known as Finsler norms), functions of bounded variation in the anisotropic setting, and rearrangements.

We first introduce some basic notations. 
We will denote by $\chi_{\Omega}$ the characteristic of a set $\Omega$ and by $|\Omega|$ the measure of $\Omega$. The perimeter of a set $\Omega$ defined as $P(\Omega)$. Here $B_R(x)$ denotes the closed ball of radius $R$ centered at $x$ and $B_R$ centered at origin.

\subsection{Anisotropic norm and Wulff shape}
For this section we refer to \cite{AFLT97,CWYY}.
Throughout the paper we assume that \(H:\mathbb R^n\to[0,+\infty)\) is a norm
such that
\[
H\in C^2(\mathbb R^n\setminus\{0\}),
\]
and such that the unit ball of \(H\) is strictly convex and has \(C^2\) boundary.
The function $H$ is called anisotropic norm or Finsler norm, and, often, it represents the surface tension density.  In particular,
\begin{equation}\label{eq:H_properties}
H(\xi)>0\quad\text{for }\xi\neq0,\qquad
H(t\xi)=|t|H(\xi)
\quad\text{for every }t\in\mathbb R,\ \xi\in\mathbb R^n.
\end{equation}
Since \(H\) is a norm, we have that 
\[
H(-\xi)=H(\xi)\qquad\text{for every }\xi\in\mathbb R^n.
\]
Moreover, we have that the anisotropy $H$ satisfies
\begin{equation} \label{control}
\alpha |\xi | \le H(\xi) \le \beta |\xi|,
\end{equation}
for some positive constants $\alpha \le \beta$. The polar norm is denoted by $H^\circ: \R^n \to [0, +\infty)$  and it is defined in the following way 
\begin{equation}
    H^\circ(x) = \sup_{\xi \neq 0} \frac{\langle x, \xi \rangle}{H(\xi)},
\end{equation}
 recalling that 
\begin{equation*}
H(x) = \sup_{\xi \neq 0 } \frac{\left< x , \xi \right>}{H^{\circ}(\xi)}.
\end{equation*}
The Wulff ball, or Wulff shape,  centered at the origin with radius $R$ is defined as:
\[
    \mathcal{W}_R = \{ x \in \R^n : H^\circ(x) < R \}.
\]
 We denote the unit Wulff ball by $\mathcal{W} = \mathcal{W}_1$ and its Lebesgue measure by $\kappa_n = |\mathcal{W}|$. Note that in the Euclidean case where $H(\xi)=|\xi|$, $\mathcal{W}$ is the standard unit ball and $\kappa_n = \omega_n$.
We also recall the following properties for $H$ and $H^{\circ}$:
\begin{equation} \label{prodsc}
H_{\xi}(\xi)\cdot \xi = H(\xi), \;\;\;\; H^{\circ}_{\xi}(\xi)\cdot \xi = H^{\circ}(\xi),
\end{equation} 
\begin{equation} \label{HHcirc}
H(H^{\circ}_{\xi}(\xi))= H^{\circ}(H_{\xi}(\xi))=1 \,\,\,\,\, \forall \xi \in \mathbb{R}^n \setminus \{0\},
\end{equation} 
\begin{equation}\label{Hcircxi}
H^{\circ}(\xi)H_{\xi}(H^{\circ}_{\xi}(\xi))=H(\xi)H^{\circ}_{\xi}(H_{\xi}(\xi)) = \xi \,\,\, \forall \xi \in \mathbb{R}^n \setminus \{0\}.
\end{equation}

\subsection{Anisotropic total variation and perimeter}
For this section we refer to \cite{amar,ambrosio2000functions, fusco_cianchi,maggi2012sets}.
Let $\Omega \subset \R^n$ be an open set. We say that a function $u \in L^1(\Omega)$ is a function of bounded variation, denoted by $u \in \BV(\Omega)$, if its distributional derivative is a vector-valued Radon measure with finite total variation. The space $\BV(\Omega)$ of functions of bounded variation in $\Omega$ is a Banach space when endowed with the norm $\|u\|_{BV(\Omega)}=\|u\|_{L^1(\Omega)}+|Du|(\Omega)$. 
The anisotropic total variation of $u$ in $\Omega$ with respect to $H$ is defined as:
\begin{equation} \label{eq:aniso_TV_def}
    |Du|_H(\Omega) = \sup \left\{ \int_\Omega u \divg \zeta \, dx : \zeta \in C^1_c(\Omega, \R^n), \, H^\circ(\zeta(x)) \leq 1 \text{ for all } x \in \Omega \right\}.
\end{equation}
By the Riesz Representation Theorem and the Lebesgue decomposition of the measure $Du$, this total variation admits an integral representation. Specifically, if $Du = \nu_u |Du|$ is the polar decomposition where $\nu_u$ is the Radon-Nikodym derivative, then:
\begin{equation} \label{eq:aniso_TV_integral}
    |Du|_H(\Omega) = \int_\Omega H(\nu_u) \, d|Du|.
\end{equation}
Using the decomposition
$|Du| = |D^au| + |D^su|$, with $D^au\ll \mathcal{L}^{n}$ and  $D^su \perp \mathcal{L}^{n}$, this can be written explicitly as:
\[
    |Du|_H(\Omega) = \int_\Omega H(\nabla^a u) \, dx + \int_\Omega H\left(\frac{d D^s u}{d |D^s u|}\right) \, d|D^s u|.
\]
For a measurable set $E \subset \R^n$, the anisotropic perimeter of $E$ in $\Omega$ is defined as the anisotropic total variation of its characteristic function:
\[
    P_H(E; \Omega) = |D\chi_E|_H(\Omega) = \int_{\partial^* E \cap \Omega} H(\nu_E) \, d\mathcal{H}^{n-1},
\]
where $\partial^* E$ is the reduced boundary of $E$ (see \cite{evans2015measure}) and $\nu_E$ is the generalized outer normal.
If $E$ is an open set with Lipschitz boundary, then the outer unit normal exists almost everywhere, $\partial^* E$ coincides with $\partial E$ (up to $\mathcal{H}^{n-1}$-negligible sets), and we recover the integral formulation:
\begin{equation}
P_H(E,\Omega) = \int_{\partial E \cap \Omega} H(\nu )\,d\mathcal{H}^{n-1}.
\end{equation}
By the properties of $H$, it is well known that there exist constants $\alpha, \beta > 0$ such that:
\begin{equation*}
\alpha P(E) \le P_H(E)\le \beta P(E).
\end{equation*}

A fundamental tool in our analysis is the anisotropic Coarea Formula (Fleming-Rishel formula). For any $u \in \BV(\Omega)$, it holds:
\begin{equation} \label{eq:coarea}
    |Du|_H(\Omega) = \int_{-\infty}^{+\infty} P_H(\{u > t\}, \Omega) \, dt.
\end{equation}
We remark that if $u \in W^{1,1}(\Omega)$, the measure $Du$ is absolutely continuous with respect to the Lebesgue measure ($D^s u = 0$). In this case, the total variation reduces to $\int_\Omega H(\nabla u) dx$, and the coarea formula takes the  form (see \cite{ALT90}):
\begin{equation} \label{AFLTAB}
    -\frac{d}{dt}\int_{\{u>t\}} H(\nabla u)\,dx = \int_{\partial^*\{u>t\}\cap \Omega} \frac{H(\nabla u)}{|\nabla u|}\,d\mathcal{H}^{n-1}.
\end{equation}

Finally, we recall the anisotropic isoperimetric inequality \cite{busemann1949isoperimetric, dacorogna1992wulff, fonseca1991uniqueness}: for any set $E$ of finite perimeter,
\begin{equation} \label{eq:isoperimetric}
    P_H(E) \geq n \kappa_n^{1/n} |E|^{1 - 1/n}.
\end{equation}
Equality holds if and only if $E$ is equivalent to a Wulff ball (up to translation and dilation).
\begin{remark}

We emphasize that due to the properties of $H$ (see in particular \eqref{eq:H_properties}), there exist constants $\alpha, \beta > 0$ such that $\alpha|\xi| \le H(\xi) \le \beta|\xi|$ for all $\xi \in \R^n$. Consequently, the anisotropic total variation $|Du|_H(\Omega)$ is finite if and only if the standard Euclidean total variation $|Du|(\Omega)$ is finite. Therefore, the space $\BV(\Omega)$ remains the same as in the classical setting; only the measure of the total variation changes.
\end{remark}

\subsection{Rearrangements}\label{prel:rearrangements}

Let $u: \Omega \to \R$ be a measurable function. The distribution function of $u$ is defined as:
\[
    \mu(t) = |\{ x \in \Omega : |u(x)| > t \}|, \quad t \geq 0.
\]
The decreasing rearrangement $u^*: [0, |\Omega|] \to [0, +\infty)$ is defined as:
\[
    u^*(s) = \inf \{ t \geq 0 : \mu(t) \leq s \}.
\]
We define the anisotropic decreasing symmetrization of $u$, denoted by $u^\sharp$, as the function defined on the symmetrized Wulff ball $\Omega^\star = \mathcal{W}_R$ (where $R$ is chosen such that $|\Omega^\star| = |\Omega|$). It is given by:
\begin{equation} \label{eq:ustar_def}
    u^\sharp(x) = u^*(\kappa_n (H^\circ(x))^n), \quad x \in \Omega^\star.
\end{equation}
The function $u^\sharp$ is radial with respect to the anisotropic distance $H^\circ$, decreasing with respect to $H^\circ(x)$, and is equimeasurable with $u$. 
Moreover, we define the increasing rearrangements of $u$ as 
$$u_*(s)=u^*(|\Omega|-s), $$
and the anisotropic increasing symmetrization as 
$$ u_\sharp(x)=u_*(\kappa_n (H^\circ(x))^n), \quad x \in \Omega^\star.$$
Consequently, for any $p \in [1, \infty]$, we have: 
\[
    \|u\|_{L^p(\Omega)} = \|u^*\|_{L^p(0, |\Omega|)} = \|u^\sharp\|_{L^p(\Omega^\sharp)}=\|u_*\|_{L^p(0, |\Omega|)} = \|u_\sharp\|_{L^p(\Omega^\sharp)}.
\]

A key property we will use is the Hardy-Littlewood inequality: for any non-negative functions $f, g \in L^1(\Omega)$,
\[
    \int_\Omega |f(x) g(x)| \, dx \leq \int_0^{|\Omega|} f^*(s) g^*(s) \, ds=\int_{\Omega^\star} f^\sharp(x)g^\sharp(x)dx. 
\]
In particular, choosing $g(x) = \chi_{\{u > t\}}(x)$, we obtain:
\begin{equation} \label{eq:HL_levelsets}
    \int_{\{u > t\}} f(x) \, dx \leq \int_0^{\mu(t)} f^*(s) \, ds.
\end{equation}
Moreover, we have
\[
    \int_{\Omega^\star }f^\sharp(x) g_\sharp(x) \, dx = \int_0^{|\Omega|} f^*(s) g_*(s) \, ds\leq\int_{\Omega} |f(x)g(x)|dx, 
\]

Regarding the norm of the gradient, a generalized version of the  well known P\'olya-Szeg\"o inequality holds and it states (see for instance \cite{AFTL97}), if $w \in W^{1,p}_0(\Omega)$ for $p\ge 1$, then we have that
\begin{equation*}
    \int_{\Omega}H(\nabla u)^p\,dx \ge \int_{\Omega^{\star}} H(\nabla u^{*})^p\,dx.
\end{equation*}
where $\Omega^\star$ is the Wulff Shape such that $|\Omega^\star|=|\Omega|$.
The equality case, proved in the Euclidean case by \cite{Brothers1988}, and an alternative new proof was studied in \cite{ferone2003minimal}.

\section{Main Result}
\label{sec:main_result}

In this section, we construct the anisotropic gradient rearrangement and prove the
main $L^1$ comparison theorem. The argument follows the strategy of
\cite{AGNT24}, with the Euclidean perimeter replaced by the anisotropic perimeter
and the centered ball replaced by the Wulff ball.

\subsection{Technical lemmas}
\label{subsec:lemmas}

Let $\Omega\subset \R^n$ be an open, bounded set with finite perimeter, and let
$u\in \BV_0(\Omega)$ be non-negative. For every $s\ge 0$, define the truncation
\begin{equation}\label{eq:truncation_def}
    v_s(x):=(u(x)-u^*(s))_+ .
\end{equation}
Since $u\in \BV_0(\Omega)$, the function $v_s$ belongs to $\BV_0(\Omega)$ for
a.e. $s\ge 0$. We set
\begin{equation}\label{eq:G_def}
    G(s):=|Dv_s|_H(\R^n)=|D^a v_s|_H(\R^n)+|D^s v_s|_H(\R^n).
\end{equation}
The function $G$ is increasing on $[0,+\infty)$, hence it induces a positive Radon
measure $\sigma_H$ on $[0,+\infty)$ such that
\begin{equation}\label{def:sigma}
    G(s)=\int_{(0,s]} d\sigma_H(\tau),
    \qquad \forall s\in[0,+\infty).
\end{equation}

The next lemma is the anisotropic analogue of the corresponding formula in
\cite{AGNT24}.

\begin{lemma}\label{lem:G_derivative}
Let $u$ be a non-negative function in $\BV_0(\Omega)$. Then, for a.e. $s\in[0,+\infty)$,
\begin{equation}\label{eq:G_integral}
    G(s)=\int_{u^*(s)}^{+\infty} P_H(\{u>t\})\,dt.
\end{equation}
\end{lemma}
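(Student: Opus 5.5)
The plan is to apply the anisotropic coarea formula \eqref{eq:coarea} to the truncation $v_s$ on all of $\R^n$, and then identify the superlevel sets of $v_s$ with those of $u$. Fix $s\ge 0$ and set $\tau:=u^*(s)\ge 0$. Since $u\in \BV_0(\Omega)$ and $t\mapsto (t-\tau)_+$ is $1$-Lipschitz, $v_s=(u-\tau)_+$ belongs to $\BV(\R^n)$ and vanishes outside $\Omega$. We may therefore apply \eqref{eq:coarea} with $\R^n$ as ambient open set:
\[
G(s)=|Dv_s|_H(\R^n)=\int_{-\infty}^{+\infty}P_H(\{v_s>r\};\R^n)\,dr .
\]

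Next we compute the level sets. For $r<0$ we have $\{v_s>r\}=\R^n$, whose perimeter is zero. For $r\ge 0$, $\{v_s>r\}=\{u>\tau+r\}$, because $(u-\tau)_+>r\ge0$ holds if and only if $u>\tau+r$. Substituting $t=\tau+r$ gives
\[
G(s)=\int_0^{+\infty}P_H(\{u>\tau+r\})\,dr=\int_{u^*(s)}^{+\infty}P_H(\{u>t\})\,dt,
\]
where $P_H(E)=P_H(E;\R^n)$. This is \eqref{eq:G_integral}. The argument actually works for every $s$, not just for almost every $s$.

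The only delicate point is ensuring that the perimeter is taken in $\R^n$ rather than in $\Omega$. Because $u$ is extended by zero, the sets $\{u>t\}$ for $t>0$ are subsets of $\Omega$ with finite perimeter in $\R^n$, and their perimeter includes the portions of reduced boundary lying on $\partial\Omega$. These portions are precisely what produce the jump part $|D^s v_s|$ across $\partial\Omega$ in $G$, so using \eqref{eq:coarea} on $\R^n$ accounts for it automatically. We also need $v_s\in L^1$ with finite total variation, which follows from $|Dv_s|(\R^n)\le |Du|(\R^n)$ by the chain rule for Lipschitz truncations. Measurability of $t\mapsto P_H(\{u>t\})$ is part of the coarea statement. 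Consequently there is no genuine obstacle here, and the "a.e." in the statement only reflects the a.e. membership of $v_s$ in $\BV_0(\Omega)$ asserted earlier.
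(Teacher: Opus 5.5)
Your proposal is correct and follows essentially the same route as the paper: apply the anisotropic coarea formula to $v_s$ on $\R^n$, use $\{v_s>r\}=\{u>u^*(s)+r\}$ for $r\ge 0$ (negative levels give $\R^n$, which has zero perimeter), and change variables $t=u^*(s)+r$. Your remark that the identity holds for every $s$ with $u^*(s)<\infty$ is also right. This includes every $s>0$, since $u\in L^1$; the only possible exception is $s=0$ when $u$ is unbounded.
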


\begin{proof}
Applying the anisotropic coarea formula \eqref{eq:coarea} to $v_s$, we get
\[
|Dv_s|_H(\R^n)=\int_{-\infty}^{+\infty} P_H(\{v_s>\tau\})\,d\tau .
\]
Since $\{v_s>\tau\}=\{u>u^*(s)+\tau\}$ for every $\tau>0$, by the change of
variable $t=u^*(s)+\tau$ we obtain
\[
G(s)=\int_0^{+\infty} P_H(\{u>u^*(s)+\tau\})\,d\tau
=\int_{u^*(s)}^{+\infty} P_H(\{u>t\})\,dt,
\]
which is exactly \eqref{eq:G_integral}.
\end{proof}

We need an approximation lemma to handle the singular part of the gradient. This result, was originally proved in \cite{alvino1978sulle} in the Euclidean setting, extends naturally to our context,  ensuring we can approximate $L^1$ functions while preserving their rearrangement.
\begin{lemma}\label{lem:approximation}
Let \(u\in \BV_0(\Omega)\) be non-negative and let \(g\in L^1(\Omega)\). Define
\[
G_g(s):=\int_{\{u>u^*(s)\}} g(x)\,dx,
\qquad s\in[0,|\Omega|].
\]
Assume that \(G_g\) is absolutely continuous and write
\[
G_g(s)=\int_0^s F_g(\tau)\,d\tau .
\]
Then there exists a sequence \(\{g_k\}_{k\in\mathbb N}\subset L^1(0,|\Omega|)\) such that
\[
g_k^*=g^*
\qquad\text{for every }k,
\]
and
\[
\lim_{k\to\infty}
\int_0^{|\Omega|}g_k(\tau)\varphi(\tau)\,d\tau
=
\int_0^{|\Omega|}F_g(\tau)\varphi(\tau)\,d\tau
\qquad
\forall\,\varphi\in \BV([0,|\Omega|)).
\]
\end{lemma}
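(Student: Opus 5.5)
We construct the $g_k$ directly on $(0,|\Omega|)$: first split the interval into the "flat" and "strictly decreasing" parts of $u^*$, then on each piece build rearrangements of $g$ that oscillate finer and finer.

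\emph{Structure of $F_g$.} Let $\mathcal F=\{t>0:\ |\{u=t\}|>0\}$. This set is at most countable, and to each $t\in\mathcal F$ corresponds a maximal interval $I_t=(a_t,b_t)$ on which $u^*\equiv t$. Outside $\bigcup_t I_t$ the map $s\mapsto\{u>u^*(s)\}$ is, up to null sets, a continuous increasing family of level sets of measure exactly $s$. On such a region we can choose a measure-preserving map $\sigma:\Omega\to(0,|\Omega|)$ with $u=u^*\circ\sigma$ a.e. (the standard Ryff-type representation). Then differentiating $G_g$ gives $F_g=$ the conditional expectation of $g$ along this ``level-set coordinate''. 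On each $I_t$, $G_g$ is affine by construction: it is linear interpolation between $\int_{\{u>t\}}g$ and $\int_{\{u\ge t\}}g$. So $F_g\equiv \frac{1}{|I_t|}\int_{\{u=t\}}g$ there. In all cases $F_g$ is an average (conditional expectation) of $g$ transported to $(0,|\Omega|)$. Hence, by the Hardy--Littlewood/majorization property of conditional expectations, $F_g\prec g^*$: $\int_0^s F_g^*\le\int_0^s g^*$ with equal total integrals.

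\emph{Construction of $g_k$.} A function $F$ majorized by $g^*$ is a weak limit of rearrangements of $g^*$; this is the classical Ryff/Alvino--Trombetti result. Concretely, we subdivide $(0,|\Omega|)$ into intervals $J_{k,i}$ of length at most $1/k$. The pieces are chosen so that, on each, the mass of $F_g$ is matched by a block of $g^*$ with the same measure and integral, via the decomposition from the conditional-expectation representation. We then let $g_k$ on $J_{k,i}$ be the rearranged values of $g$ restricted to the corresponding fibre piece. By construction $g_k^*=g^*$ and $\int_{J_{k,i}}g_k=\int_{J_{k,i}}F_g$ for all $i$.

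\emph{Convergence.} For $\varphi\in \BV$, approximate $\varphi$ by its piecewise averages on $J_{k,i}$. The error $\int (g_k-F_g)\varphi$ equals $\sum_i\int_{J_{k,i}}(g_k-F_g)(\varphi-c_i)$. This is controlled by equi-integrability of $\{g_k\}$ (all equimeasurable with $g\in L^1$), together with $\operatorname{osc}_{J_{k,i}}\varphi$, whose sum is bounded by $|D\varphi|$ and concentrates on few intervals. First treat bounded $g$, where the estimate is $\le 2\|g\|_\infty\cdot$ (small) $+\,\|g\|_\infty |D\varphi|/k$-type terms. Then pass to general $g\in L^1$ by truncation, using that truncation commutes with rearrangement.

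The main obstacle is the construction step: arranging blocks so that $g_k$ is exactly equimeasurable with $g$ while matching local integrals of $F_g$. I would handle it through the measure-preserving map $\sigma$ and a disintegration of $g$ along its fibres. Inside each block of fibres, one places the conditional distributions side by side in shrinking subintervals.
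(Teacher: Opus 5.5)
\textbf{There is a genuine gap: your description of $F_g$ is wrong where $u^*$ is constant, and this breaks the claim that $F_g\prec g^*$ with equal total integrals.}

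\emph{Flat levels $t>0$.} On a flat interval $I_t$ with $t>0$ one has $u^*\equiv t$. So $G_g(s)=\int_{\{u>t\}}g$ is constant on $I_t$, not affine. It then jumps by $\int_{\{u=t\}}g$ at the right endpoint. The absolute-continuity hypothesis is what forces $\int_{\{u=t\}}g=0$, and only for that reason does your formula $F_g=\frac{1}{|I_t|}\int_{\{u=t\}}g$ give the correct value $0$.

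\emph{The zero level.} The level $t=0$ is excluded from $\mathcal F$, and nothing similar happens there. On $(|\{u>0\}|,|\Omega|)$ one has $u^*\equiv0$, $G_g\equiv\int_{\{u>0\}}g$ and $F_g=0$, and no jump follows. So $\int_{\{u=0\}}g$ is unconstrained and $\int_0^{|\Omega|}F_g=\int_{\{u>0\}}g\neq\int_\Omega g$ in general. Your assertion that off $\bigcup_t I_t$ the sets $\{u>u^*(s)\}$ have measure $s$ is false on this interval.

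Concretely, take $\Omega=\{|x|<2\}$, $u=(1-|x|)_+$ and $g\equiv1$. Then $G_g(s)=\min\{s,|B_1|\}$ is absolutely continuous and $F_g=\chi_{(0,|B_1|)}$. No block of values of $g$ can carry the zero integral your matching $\int_{J_{k,i}}g_k=\int_{J_{k,i}}F_g$ requires on $(|B_1|,|B_2|)$. In fact any $g_k$ equimeasurable with $g$ in the signed sense is $\equiv1$, and the conclusion fails for $\varphi\equiv1$.

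\textbf{How to repair it.} You need a separate treatment of $\{u=0\}$, in one of two ways.
\begin{itemize}
\item Observe that $\int_{\{u=0\}}g=0$ in the case the paper actually uses: $g=H(\nabla^a u)$ vanishes a.e.\ on every level set of a BV function.
\item Use the paper's convention that $g^*$ is the rearrangement of $|g|$. On each grid block inside $(|\{u>0\}|,|\Omega|)$, place $|g|$ restricted to a piece of $\{u=0\}$ of the same measure. Give it opposite signs on two halves of equal $|g|$-mass, so every block integral is $0$. In the example this means $g_k=1$ on $(0,|B_1|)$ and rapidly alternating $\pm1$ afterwards.
\end{itemize}

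With this repair the rest of your plan works. It gives a self-contained version of the Alvino--Trombetti argument that the paper only cites; note that the lemma involves no anisotropy at all.
\begin{itemize}
\item On flat levels $t>0$, split $\{u=t\}$ into pieces of prescribed measure and zero $g$-integral, using Lyapunov's convexity theorem.
\item The convergence step needs no truncation. Exact block matching already gives
\[
\Bigl|\int_0^{|\Omega|}(g_k-F_g)\varphi\,d\tau\Bigr|\le|D\varphi|\bigl((0,|\Omega|)\bigr)\int_0^{1/k}\bigl(g^*+|F_g|^*\bigr)\,d\tau\longrightarrow0 .
\]
\item Your truncation detour is actually unsafe. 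For a truncation $\tilde g$ of $g$, the function $G_{\tilde g}$ need not be absolutely continuous when $g$ changes sign on a flat level set.
\end{itemize}
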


We now split the anisotropic variation of the truncation into its absolutely
continuous and singular parts. For $s\in[0,|\Omega|]$, set
\[
G_{H,1}(s):=|D^a v_s|_H(\R^n),
\qquad
G_{H,2}(s):=|D^s v_s|_H(\R^n).
\]
Then
\[
G(s)=G_{H,1}(s)+G_{H,2}(s).
\]
Moreover, by the chain rule for BV truncations,
\[
D^a v_s=\nabla^a u\,\chi_{\{u>u^*(s)\}}\,\mathcal L^n,
\]
hence
\begin{equation}\label{eq:G1_formula}
G_{H,1}(s)=\int_{\{u>u^*(s)\}} H(\nabla^a u)\,dx.
\end{equation}
Arguing exactly as in \cite[Corollary 2.5]{AGNT24}, the map $G_{H,1}$ is increasing
and absolutely continuous on $[0,|\Omega|]$. Therefore there exists a function
$F_{H,1}\in L^1(0,|\Omega|)$ such that
\begin{equation}\label{eq:G1_density}
G_{H,1}(s)=\int_0^s F_{H,1}(\tau)\,d\tau
\qquad \forall s\in[0,|\Omega|].
\end{equation}
On the other hand, $G_{H,2}$ is increasing, hence there exists a positive Radon
measure $\sigma_{H,2}$ on $[0,|\Omega|]$ such that
\begin{equation}\label{eq:G2_measure}
G_{H,2}(s)=\int_{(0,s]} d\sigma_{H,2}(\tau)
\qquad \forall s\in[0,|\Omega|].
\end{equation}
Combining \eqref{def:sigma}, \eqref{eq:G1_density}, and \eqref{eq:G2_measure}, we get
\begin{equation}\label{eq:sigma_decomposition}
d\sigma_H = F_{H,1}(s)\,ds + d\sigma_{H,2}(s)
\qquad \text{on }[0,|\Omega|].
\end{equation}
Finally, since $u^*(|\Omega|)=0$, we have
\begin{equation}\label{eq:sigma2_total_mass}
\int_{(0,|\Omega|]} d\sigma_{H,2}(\tau)
=
G_{H,2}(|\Omega|)
=
|D^s u|_H(\mathbb{R}^n).
\end{equation}

\subsection{Definition and explicit computation of the gradient rearrangement}
\label{subsec:existence}

Let $\Omega^\star$ be the centered Wulff ball such that $|\Omega^\star|=|\Omega|$.
Set
\[
g:=H(\nabla^a u)\in L^1(\Omega),
\]
and let $g_*$ be the increasing rearrangement of $g$ on $[0,|\Omega|]$. We define
the increasing anisotropic symmetrization of $g$ on $\Omega^\star$ by
\[
g_\sharp(x):=g_*\!\bigl(\kappa_n(H^\circ(x))^n\bigr),
\qquad x\in\Omega^\star .
\]
Notice that $g_\sharp$ is $H^\circ$-radial and nondecreasing with respect to
$H^\circ(x)$.

We look for a function
\[
u^\star \in \BV_0(\Omega^\star)\cap W^{1,1}(\Omega^\star)\cap L^\infty(\Omega^\star),
\]
such that
\begin{equation}\label{eq:ustar_problem}
\begin{cases}
H(\nabla u^\star)(x)=g_\sharp(x) & \text{for a.e. }x\in\Omega^\star,\\[1ex]
u^\star \text{ is } H^\circ\text{-radial and nonincreasing},\\[1ex]
u^\star(x)=c_0 & \text{for }x\in\partial\Omega^\star,
\end{cases}
\end{equation}
where
\begin{equation}\label{eq:boundary_constant}
c_0:=\frac{|D^s u|_H(\mathbb{R}^n)}{P_H(\Omega^\star)}.
\end{equation}

\begin{proposition}\label{prop:existence}
There exists a unique function $u^\star$ satisfying \eqref{eq:ustar_problem}.
Moreover, its decreasing rearrangement is given by
\begin{equation}\label{eq:ustar_rearrangement_explicit}
(u^\star)^*(s)
=
\int_s^{|\Omega|}
\frac{g_*(t)}{n\kappa_n^{1/n} t^{1-\frac1n}}\,dt
+
\frac{|D^s u|_H(\mathbb{R}^n)}{n\kappa_n^{1/n}|\Omega|^{1-\frac1n}},
\qquad s\in[0,|\Omega|].
\end{equation}
\end{proposition}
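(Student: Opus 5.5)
We look for $u^\star$ in the form $u^\star(x)=\phi(\kappa_n (H^\circ(x))^n)$, where $\phi:[0,|\Omega|]\to[0,\infty)$ is nonincreasing and absolutely continuous. The PDE in \eqref{eq:ustar_problem} then becomes a first-order ODE for $\phi$, and we solve it explicitly.

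\textbf{Reduction to an ODE.} Set $r=H^\circ(x)$ and $s=\kappa_n r^n$. At points where $H^\circ$ is differentiable we have
\[
\nabla u^\star(x)=\phi'(s)\,n\kappa_n r^{n-1}\,\nabla H^\circ(x),
\]
and $H(\nabla H^\circ(x))=1$ by \eqref{HHcirc}. Using the homogeneity \eqref{eq:H_properties} we get
\[
H(\nabla u^\star)=|\phi'(s)|\,n\kappa_n r^{n-1}=|\phi'(s)|\,n\kappa_n^{1/n}s^{1-\frac1n}.
\]
Since $\phi$ is nonincreasing, the equation $H(\nabla u^\star)=g_\sharp$ becomes
\[
-\phi'(s)=\frac{g_*(s)}{n\kappa_n^{1/n}s^{1-1/n}}\qquad\text{for a.e. } s\in(0,|\Omega|).
\]
The boundary condition reads $\phi(|\Omega|)=c_0$. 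Since $P_H(\Omega^\star)=n\kappa_n^{1/n}|\Omega|^{1-1/n}$ (equality in \eqref{eq:isoperimetric} for the Wulff ball), we have
\[
c_0=\frac{|D^s u|_H(\R^n)}{n\kappa_n^{1/n}|\Omega|^{1-1/n}}.
\]
Integrating from $s$ to $|\Omega|$ gives exactly the right-hand side of \eqref{eq:ustar_rearrangement_explicit}. Because $\phi$ is nonincreasing and $s\mapsto \kappa_n (H^\circ)^n$ is measure preserving from $\Omega^\star$ onto $(0,|\Omega|)$, this $\phi$ is the decreasing rearrangement of $u^\star$, i.e. $\phi=(u^\star)^*$.

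\textbf{Regularity.} We check that $u^\star\in W^{1,1}(\Omega^\star)\cap L^\infty(\Omega^\star)$, and that its zero extension lies in $\BV(\R^n)$. The last point is automatic: the extension has a constant jump $c_0$ across $\partial\Omega^\star$.
\begin{itemize}
\item Gradient: by the coarea/polar change of variables, $\int_{\Omega^\star}H(\nabla u^\star)\,dx=\int_0^{|\Omega|}g_*\,ds=\|g\|_{L^1(\Omega)}<\infty$, so $\nabla u^\star\in L^1$ by \eqref{control}.
\item Boundedness: $\phi(0)$ must be finite. Since $g_*$ is nondecreasing, $g_*(t)\le \frac{1}{|\Omega|-t}\int_t^{|\Omega|}g_*\le \frac{2\|g\|_1}{|\Omega|}$ for $t\le|\Omega|/2$. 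Near $0$ the integrand is therefore bounded by $C t^{-1+1/n}$, which is integrable. Near $|\Omega|$ the integrand is $g_*\,t^{-1+1/n}$ with the weight bounded, hence integrable as well. So $\phi(0)<\infty$ and $u^\star\in L^\infty$.
\item Sobolev regularity: $\phi$ is absolutely continuous on $[0,|\Omega|]$ and $H^\circ$ is Lipschitz. So $u^\star$ is locally absolutely continuous along rays, and its distributional gradient is the pointwise one computed above. The origin is a single point, hence negligible for $n\ge2$. For $n=1$ the weight $t^{1-1/n}$ is constant and the argument is even simpler.
\end{itemize}

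\textbf{Uniqueness.} Let $w$ be any solution of \eqref{eq:ustar_problem}. Being $H^\circ$-radial, $w=\psi(\kappa_n (H^\circ)^n)$ for some $\psi$. Since $w\in W^{1,1}$, $\psi$ is locally absolutely continuous on $(0,|\Omega|]$; this follows from the restriction of Sobolev functions to a.e. ray, combined with radial symmetry. Monotonicity plus $H(\nabla w)=g_\sharp$ then force $\psi'=\phi'$ a.e. The common boundary value $c_0$ at $s=|\Omega|$ gives $\psi=\phi$.

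\textbf{Main obstacle.} The delicate step is justifying the chain rule and absolute continuity near $s=0$, together with the identity $H(\nabla H^\circ)=1$ a.e. The latter holds because $H^\circ$ is $C^1$ away from the origin under the assumptions on $H$. I would handle the behaviour near the origin via the integrability estimate on $g_*$ given in the boundedness step above.
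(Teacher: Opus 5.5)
Your proposal is correct and follows the paper's argument. The radial ansatz together with $H(\nabla H^\circ)=1$ turns the equation into an ODE for the profile, which is then integrated from the boundary using the value $c_0$ there and $P_H(\Omega^\star)=n\kappa_n^{1/n}|\Omega|^{1-1/n}$. The differences are only cosmetic: you parametrize by $s=\kappa_n (H^\circ)^n$ from the start, whereas the paper works in $r=H^\circ$ and substitutes $t=\kappa_n\rho^n$ afterwards. You also check the $W^{1,1}\cap L^\infty$ regularity and spell out the uniqueness argument, both of which the paper leaves implicit.
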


\begin{proof}
Write $\Omega^\star=W_R$, where $R>0$ is determined by $|\Omega|=\kappa_n R^n$.
We look for a solution of the form
\[
u^\star(x)=\phi(H^\circ(x)),
\]
where $\phi:[0,R]\to\R$ is nonincreasing.

Let $r=H^\circ(x)$. By the chain rule,
\[
\nabla u^\star(x)=\phi'(r)\nabla H^\circ(x),
\]
hence
\[
H(\nabla u^\star(x))
=
H\bigl(\phi'(r)\nabla H^\circ(x)\bigr)
=
|\phi'(r)|\,H(\nabla H^\circ(x)).
\]
Using the identity $H(\nabla H^\circ(x))=1$ for a.e. $x\neq 0$, and the fact that
$\phi$ is nonincreasing, we obtain
\[
H(\nabla u^\star(x))=-\phi'(r).
\]
Therefore the condition $H(\nabla u^\star)=g_\sharp$ becomes
\[
-\phi'(r)=g_\sharp(x)=g_*(\kappa_n r^n).
\]
Integrating between $r$ and $R$ gives
\[
\phi(r)-\phi(R)=\int_r^R g_*(\kappa_n\rho^n)\,d\rho.
\]
Now set $t=\kappa_n\rho^n$. Since
\[
dt=n\kappa_n\rho^{n-1}\,d\rho,
\qquad
d\rho=\frac{dt}{n\kappa_n^{1/n}t^{1-\frac1n}},
\]
we get
\[
\phi(r)-\phi(R)
=
\int_{\kappa_n r^n}^{|\Omega|}
\frac{g_*(t)}{n\kappa_n^{1/n}t^{1-\frac1n}}\,dt.
\]
Using the boundary condition $\phi(R)=c_0$, we conclude that
\[
u^\star(x)
=
\int_{\kappa_n(H^\circ(x))^n}^{|\Omega|}
\frac{g_*(t)}{n\kappa_n^{1/n}t^{1-\frac1n}}\,dt
+
c_0.
\]
Since $u^\star$ is $H^\circ$-radial and nonincreasing, its decreasing rearrangement is
obtained by setting $s=\kappa_n(H^\circ(x))^n$, and therefore
\[
(u^\star)^*(s)
=
\int_s^{|\Omega|}
\frac{g_*(t)}{n\kappa_n^{1/n}t^{1-\frac1n}}\,dt
+
c_0.
\]
Finally, since
\[
P_H(\Omega^\star)=P_H(W_R)=n\kappa_n R^{n-1}
=
n\kappa_n^{1/n}|\Omega|^{1-\frac1n},
\]
the expression for $c_0$ in \eqref{eq:boundary_constant} becomes
\[
c_0
=
\frac{|D^s u|_H(\mathbb{R}^n)}{n\kappa_n^{1/n}|\Omega|^{1-\frac1n}},
\]
and \eqref{eq:ustar_rearrangement_explicit} follows. Uniqueness is immediate from
the explicit formula.
\end{proof}

\subsection{Comparison theorem}
\label{subsec:comparison}

We can now prove the main comparison result.

\begin{theorem}\label{thm:main_comparison}
Let $\Omega\subset\R^n$ be a bounded open set with finite perimeter, and let
$\Omega^\star$ be the centered Wulff ball such that $|\Omega^\star|=|\Omega|$.
Assume that $u$ is a non-negative function in $\BV_0(\Omega)$. If $u^\star$ is the
anisotropic gradient rearrangement defined by \eqref{eq:ustar_problem}, then
\begin{equation}\label{eq:main_inequality}
\|u\|_{L^1(\Omega)}\le \|u^\star\|_{L^1(\Omega^\star)}.
\end{equation}
\end{theorem}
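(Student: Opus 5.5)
We follow the Talenti-type scheme of \cite{AGNT24}: derive a pointwise differential inequality for $u^*$ in terms of the measure $\sigma_H$, integrate it, and compare the result with the explicit formula \eqref{eq:ustar_rearrangement_explicit} for $(u^\star)^*$.

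\textbf{Step 1: a pointwise inequality for $u^*$.} Start from Lemma~\ref{lem:G_derivative} and the anisotropic isoperimetric inequality \eqref{eq:isoperimetric}. For $0\le s_1<s_2\le|\Omega|$ we have
\[
G(s_2)-G(s_1)=\int_{u^*(s_2)}^{u^*(s_1)}P_H(\{u>t\})\,dt\ \ge\ n\kappa_n^{1/n}\int_{u^*(s_2)}^{u^*(s_1)}\mu(t)^{1-\frac1n}\,dt .
\]
For $t$ in this range, $\mu(t)\ge s_1$ up to the usual care with flat zones of $u^*$. Hence
\[
u^*(s_1)-u^*(s_2)\le \frac{\sigma_H((s_1,s_2])}{n\kappa_n^{1/n}s_1^{1-1/n}} .
\]
In measure form this reads $-du^*\le \sigma_H(ds)/(n\kappa_n^{1/n}s^{1-1/n})$ as measures on $(0,|\Omega|]$. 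Integrate from $s$ to $|\Omega|$, using $u^*(|\Omega|)=0$ (or the right limit) and the decomposition \eqref{eq:sigma_decomposition}:
\[
u^*(s)\le \int_s^{|\Omega|}\frac{F_{H,1}(\tau)}{n\kappa_n^{1/n}\tau^{1-\frac1n}}\,d\tau+\int_{(s,|\Omega|]}\frac{d\sigma_{H,2}(\tau)}{n\kappa_n^{1/n}\tau^{1-\frac1n}} .
\]

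\textbf{Step 2: integrate in $s$ and apply Fubini.} Integrating over $(0,|\Omega|)$ and exchanging the order of integration gives
\[
\|u\|_{L^1}\le \int_0^{|\Omega|}F_{H,1}(\tau)\,\frac{\tau^{1/n}}{n\kappa_n^{1/n}}\,d\tau+\int_{(0,|\Omega|]}\frac{\tau^{1/n}}{n\kappa_n^{1/n}}\,d\sigma_{H,2}(\tau).
\]
The weight $\psi(\tau)=\tau^{1/n}/(n\kappa_n^{1/n})$ is increasing and bounded by $\psi(|\Omega|)$. Together with \eqref{eq:sigma2_total_mass}, the singular term is therefore at most
\[
\frac{|\Omega|^{1/n}\,|D^su|_H(\R^n)}{n\kappa_n^{1/n}}=|\Omega|\,c_0 ,
\]
where the equality uses $P_H(\Omega^\star)=n\kappa_n^{1/n}|\Omega|^{1-1/n}$. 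This is exactly the contribution of the constant $c_0$ to $\|u^\star\|_{L^1}$.

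\textbf{Step 3: the absolutely continuous term (main obstacle).} It remains to show
\[
\int_0^{|\Omega|}F_{H,1}\,\psi\,d\tau\le\int_0^{|\Omega|}g_*\,\psi\,d\tau .
\]
The right-hand side is the integral of the first term of \eqref{eq:ustar_rearrangement_explicit} after the same Fubini computation. Here $F_{H,1}$ is only the density of $G_{H,1}$ from \eqref{eq:G1_formula}, so it is not itself a rearrangement of $g$, and this is where Lemma~\ref{lem:approximation} enters. Take $g=H(\nabla^au)$; $G_g=G_{H,1}$ is absolutely continuous, so the lemma provides $g_k$ with $g_k^*=g^*$ and $\int g_k\varphi\to\int F_{H,1}\varphi$ for every $\varphi\in\BV$. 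Choose $\varphi=\psi$, which is monotone and bounded, hence in $\BV(0,|\Omega|)$. Since $\psi$ is increasing, $\psi_*=\psi$, and the reverse Hardy--Littlewood inequality gives $\int g_k\psi\le\int g^*\psi_*$. This is the wrong direction, so it must be used with care: for an increasing weight, the inequality actually needed is $\int g_k\psi\le\int (g_k)^{\text{incr}}\psi=\int g_*\psi$, which is the Hardy--Littlewood inequality pairing increasing with increasing. Passing to the limit in $k$ yields the desired bound.

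Summing Steps 2 and 3 gives $\|u\|_{L^1(\Omega)}\le\|u^\star\|_{L^1(\Omega^\star)}$. The delicate points are the flat zones of $u^*$ and the jumps of $\sigma_H$ in Step 1, which should be handled as in \cite{AGNT24}, and the rearrangement direction in Step 3.
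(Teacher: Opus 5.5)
Your proposal is correct and follows the paper's proof essentially step by step: the isoperimetric lower bound on $\sigma_H$, integration of $-du^*$ against $d\sigma_H/(n\kappa_n^{1/n}\tau^{1-1/n})$ from $s$ to $|\Omega|$, Fubini, the bound $\tau^{1/n}\le|\Omega|^{1/n}$ on the singular part, and Lemma~\ref{lem:approximation} with the Hardy--Littlewood inequality on the absolutely continuous part. The only variation is in Step 1, where you use the elementary fact $\mu(t)>s_1$ for $t<u^*(s_1)$ (so no care with flat zones is needed) plus a refinement of partitions of $(s,|\Omega|]$, instead of the paper's chain rule for $\mathcal K(u^*)$, which works; in Step 3, delete the claim $\int g_k\psi\le\int g^*\psi_*$, which is false since the reverse Hardy--Littlewood inequality gives the opposite direction, and keep only the increasing--increasing pairing $\int g_k\psi\le\int g_*\psi$, which is exactly what is needed.
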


\begin{proof}
Let $u^*$ be the decreasing rearrangement of $u$. By Lemma~\ref{lem:G_derivative}, for a.e.
$s\in[0,|\Omega|]$,
\[
G(s)=\int_{u^*(s)}^{+\infty} P_H(\{u>t\})\,dt .
\]
 Let
$0\le s_1<s_2\le |\Omega|$. Since \(G\) is increasing and
\[
G(s)=\sigma_H((0,s]),
\]
we have
\[
\sigma_H((s_1,s_2))
=
G(s_2^-)-G(s_1),
\]
where \(G(s_2^-):=\lim_{s\to s_2^-}G(s)\). Using the representation of \(G\)
and passing to the left limit, we obtain
\begin{align}
\sigma_H((s_1,s_2))
&=
\lim_{s\to s_2^-}
\left[
G(s)-G(s_1)
\right] \notag\\
&=
\lim_{s\to s_2^-}
\int_{u^*(s)}^{u^*(s_1)} P_H(\{u>t\})\,dt . \label{eq:sigma_interval_left}
\end{align}
By the anisotropic isoperimetric inequality \eqref{eq:isoperimetric}
\begin{align}
\sigma_H((s_1,s_2))
&\ge
\lim_{s\to s_2^-}
\int_{u^*(s)}^{u^*(s_1)}
n\kappa_n^{1/n}\mu(t)^{1-\frac1n}\,dt .
\label{eq:interval_measure_lower}
\end{align}
Define now 
\[
\mathcal K(\tau):=
\int_\tau^{+\infty}
n\kappa_n^{1/n}\mu(t)^{1-\frac1n}\,dt,
\qquad \tau\ge 0.
\]
The function \(\mathcal K\) is finite and Lipschitz continuous. Indeed, by the
anisotropic isoperimetric inequality and the anisotropic coarea formula,
\[
\int_0^{+\infty}
n\kappa_n^{1/n}\mu(t)^{1-\frac1n}\,dt
\le
\int_0^{+\infty} P_H(\{u>t\})\,dt
=
|Du|_H(\mathbb R^n)<+\infty,
\]
and the integrand is bounded by
\(n\kappa_n^{1/n}|\Omega|^{1-\frac1n}\).
With this notation, \eqref{eq:interval_measure_lower} gives
\[
\sigma_H((s_1,s_2))
\ge
D[\mathcal K(u^*)]((s_1,s_2)).
\]
By the regularity of Radon measures, this implies
\begin{equation}\label{eq:measure_domination}
\sigma_H(E)\ge D[\mathcal K(u^*)](E)
\qquad
\text{for every Borel set }E\subset(0,|\Omega|).
\end{equation}

We now consider \(D[\mathcal K(u^*)]\). Since \(u^*\) is monotone decreasing,
\(-Du^*\) is a positive measure. By the chain rule for one-dimensional BV
functions,
\begin{equation}\label{eq:chain_rule_K}
D[\mathcal K(u^*)]
=
n\kappa_n^{1/n}s^{1-\frac1n}\,(-Du^*)
\qquad\text{ on }(0,|\Omega|).
\end{equation}
 Indeed, on the
absolutely continuous and Cantor parts, one uses
\[
\mu(u^*(s))=s
\qquad\text{for }|Du^*|\text{-a.e. }s.
\]
At a jump point \(s\in J_{u^*}\), one has
\[
\mu(t)=s
\qquad
\text{for }t\in\big((u^*)_+(s),(u^*)_-(s)\big).
\]
Hence
\begin{align*}
D[\mathcal K(u^*)](\{s\})
&=
\mathcal K((u^*)_+(s))-\mathcal K((u^*)_-(s))\\
&=
\int_{(u^*)_+(s)}^{(u^*)_-(s)}
n\kappa_n^{1/n}\mu(t)^{1-\frac1n}\,dt\\
&=
n\kappa_n^{1/n}s^{1-\frac1n}
\big((u^*)_-(s)-(u^*)_+(s)\big)\\
&=
n\kappa_n^{1/n}s^{1-\frac1n}(-Du^*)(\{s\}).
\end{align*}
This proves \eqref{eq:chain_rule_K}.

Combining \eqref{eq:measure_domination} and \eqref{eq:chain_rule_K}, we get
\[
n\kappa_n^{1/n}s^{1-\frac1n}\,(-Du^*)
\le
d\sigma_H
\qquad\text{on }(0,|\Omega|).
\]
Since \(u^*(|\Omega|)=0\), it follows that, for a.e. \(s\in(0,|\Omega|)\),
\begin{equation}\label{eq:u_pointwise_estimate}
u^*(s)
\le
\int_{(s,|\Omega|]}
\frac{d\sigma_H(\tau)}
{n\kappa_n^{1/n}\tau^{1-\frac1n}}.
\end{equation}
Integrating \eqref{eq:u_pointwise_estimate} over $[0,|\Omega|]$ and using Fubini's
theorem, we get
\begin{align}
\|u\|_{L^1(\Omega)}
&=
\int_0^{|\Omega|} u^*(s)\,ds \notag\\
&\le
\int_0^{|\Omega|}
\left(
\int_s^{|\Omega|}
\frac{d\sigma_H(\tau)}{n\kappa_n^{1/n}\tau^{1-\frac1n}}
\right)ds \notag\\
&=
\frac1{n\kappa_n^{1/n}}
\int_0^{|\Omega|}\tau^{\frac1n}\,d\sigma_H(\tau). \label{eq:L1_after_fubini}
\end{align}
Using the decomposition \eqref{eq:sigma_decomposition}, this becomes
\begin{equation}\label{eq:L1_split}
\|u\|_{L^1(\Omega)}
\le
\frac1{n\kappa_n^{1/n}}
\left[
\int_0^{|\Omega|}\tau^{\frac1n}F_{H,1}(\tau)\,d\tau
+
\int_{(0,|\Omega|]}\tau^{\frac1n}\,d\sigma_{H,2}(\tau)
\right].
\end{equation}

We now estimate the first term. Set
\[
g:=H(\nabla^a u).
\]
By  Lemma \ref{lem:approximation}, applied to \(g\) with
respect to the level sets of \(u\), there exists a sequence \(\{g_k\}\subset L^1(0,|\Omega|)\)
such that
\[
g_k^*=g^*
\]
and
\[
\lim_{k\to\infty}
\int_0^{|\Omega|} g_k(\tau)\varphi(\tau)\,d\tau
=
\int_0^{|\Omega|}F_{H,1}(\tau)\varphi(\tau)\,d\tau
\qquad
\forall\varphi\in BV([0,|\Omega|)).
\]
Choosing \(\varphi(\tau)=\tau^{1/n}\) and using the Hardy--Littlewood inequality,
we obtain
\[
\int_0^{|\Omega|}\tau^{1/n}F_{H,1}(\tau)\,d\tau
\le
\int_0^{|\Omega|}\tau^{1/n}g_*(\tau)\,d\tau
=
\int_0^{|\Omega|}\tau^{1/n}\big(H(\nabla^a u)\big)_*(\tau)\,d\tau .
\]
For the second term, since $\tau^{1/n}\le |\Omega|^{1/n}$ on $[0,|\Omega|]$, by
\eqref{eq:sigma2_total_mass} we have
\begin{equation}\label{eq:sigma2_estimate}
\int_{(0,|\Omega|]}\tau^{\frac1n}\,d\sigma_{H,2}(\tau)
\le
|\Omega|^{\frac1n}|D^s u|_H(\mathbb{R}^n).
\end{equation}
Combining this with \eqref{eq:L1_split}, and \eqref{eq:sigma2_estimate},
we obtain
\begin{equation}\label{eq:L1_upper_bound}
\|u\|_{L^1(\Omega)}
\le
\frac1{n\kappa_n^{1/n}}
\int_0^{|\Omega|}\tau^{\frac1n} g_*(\tau)\,d\tau
+
\frac{|\Omega|^{1/n}}{n\kappa_n^{1/n}}|D^s u|_H(\mathbb{R}^n).
\end{equation}

It remains to compute the $L^1$ norm of $u^\star$. By
\eqref{eq:ustar_rearrangement_explicit},
\[
(u^\star)^*(s)
=
\int_s^{|\Omega|}
\frac{g_*(t)}{n\kappa_n^{1/n}t^{1-\frac1n}}\,dt
+
\frac{|D^s u|_H(\mathbb{R}^n)}{n\kappa_n^{1/n}|\Omega|^{1-\frac1n}}.
\]
Integrating over $[0,|\Omega|]$ and using Fubini's theorem once again, we get
\begin{align*}
\|u^\star\|_{L^1(\Omega^\star)}
&=
\int_0^{|\Omega|}(u^\star)^*(s)\,ds \\
&=
\frac1{n\kappa_n^{1/n}}
\int_0^{|\Omega|} t^{\frac1n} g_*(t)\,dt
+
\frac{|\Omega|^{1/n}}{n\kappa_n^{1/n}}|D^s u|_H(\mathbb{R}^n).
\end{align*}
Comparing this identity with \eqref{eq:L1_upper_bound}, we conclude that
\[
\|u\|_{L^1(\Omega)}\le \|u^\star\|_{L^1(\Omega^\star)}.
\]
This proves \eqref{eq:main_inequality}.
\end{proof}

\begin{remark}
It is worth noting that the symmetrization preserves the anisotropic total variation:
\[
|Du^\star|_H(\mathbb R^n)
=
\int_{\Omega^\star} H(\nabla u^\star)\,dx
+
c_0 P_H(\Omega^\star).
\]
By construction,
\[
\int_{\Omega^\star} H(\nabla u^\star)\,dx
=
\int_\Omega H(\nabla^a u)\,dx,
\]
and, since
\[
c_0=\frac{|D^s u|_H(\mathbb R^n)}{P_H(\Omega^\star)},
\]
we obtain
\[
|Du^\star|_H(\mathbb R^n)
=
\int_\Omega H(\nabla^a u)\,dx
+
|D^s u|_H(\mathbb R^n)
=
|Du|_H(\mathbb R^n).
\]
\end{remark}

\section {Applications: Two anisotropic versions of the torsional rigidity}
\label{sec:torsion_aniso}


In this section we prove Saint--Venant type inequalities for two functionals related to torsional rigidity, in analogy to  the Euclidean argument in \cite{AGNT24}.
The key point is that, by extending Sobolev functions by $0$ outside $\Omega$, the boundary contribution becomes the \emph{jump part}
of the anisotropic total variation, and hence it is handled by the main comparison theorem.
We set the anisotropic boundary measure
\[
d\mathcal H^{n-1}_H := H(\nu_\Omega)\,d\mathcal H^{n-1}\qquad\text{on }\partial\Omega.
\]
\subsection{Penalized torsional rigidity}

Fix $\Lambda>0$ and define, for $\psi\in H_0^1(\Omega)$,
\begin{equation}\label{eq:F_Lambda_aniso}
\mathcal F^H_{\Lambda}(\psi)
:=\frac12\int_{\Omega} H(\nabla \psi)^2\,dx-\int_{\Omega}\psi\,dx
+\Lambda\,\big|\{H(\nabla\psi)\neq 0\}\big|.
\end{equation}
The associated (penalized) torsional rigidity is
\begin{equation}\label{eq:TF_aniso_def}
T_{\mathcal F}^H(\Omega,\Lambda):=-\inf_{\psi\in H_0^1(\Omega)}\mathcal F^H_{\Lambda}(\psi).
\end{equation}
As in the Euclidean case, the infimum can be searched among non-negative functions since
$\mathcal F^H_\Lambda(|\psi|)\le \mathcal F^H_\Lambda(\psi)$. 

 The main difficulty lies in proving the existence, due to the lack of lower semicontinuity. We overcome this difficulty establishing existence in the case when $\Omega$ is a Wulff ball.  As far as the uniqueness, in order to conclude as in \cite{AGNT24}, one would need the analogous of the rigidity results proved in \cite{mercaldo_remark} in the anisotropic setting, that we do not have (see next remark). 

\begin{proposition}[Penalized torsion on a Wulff ball: existence of a symmetric minimizer]
\label{prop:TF_aniso_wulff_exist_unique}
Let $H:\R^n\to[0,+\infty)$ be a norm and let $H^\circ$ be its polar norm.
Fix $\Lambda>0$ and $R>0$, and set
\[
\W_R:=\{x\in\R^n:\ H^\circ(x)<R\}.
\]
Consider, for $\psi\in H_0^1(\W_R)$,
\[
\mathcal F^H_{\Lambda}(\psi)
=\frac12\int_{\W_R} H(\nabla \psi)^2\,dx-\int_{\W_R}\psi\,dx
+\Lambda\,\big|\{H(\nabla\psi)\neq 0\}\big|.
\]
Then $\mathcal F^H_\Lambda$ admits a minimizer in $H_0^1(\W_R)$.

Moreover, there exists a nonnegative minimizer $v\in H_0^1(\W_R)$ such that
\[
v(x)=\varphi(H^\circ(x))\qquad\text{for a.e. }x\in\W_R,
\]
for a suitable function $\varphi:[0,R]\to[0,+\infty)$, and
\[
H(\nabla v)(x)=g(H^\circ(x))\qquad\text{for a.e. }x\in\W_R,
\]
for a suitable nondecreasing function $g:[0,R]\to[0,+\infty)$.
In particular, there exists $r\in[0,R]$ such that, up to null sets,
\[
\{H(\nabla v)=0\}=\W_r,
\qquad
\{H(\nabla v)\neq 0\}=\W_R\setminus\W_r.
\]
\end{proposition}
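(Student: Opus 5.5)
Fix a minimizing sequence $\{\psi_k\}\subset H_0^1(\W_R)$ with $\psi_k\ge 0$; this is possible since $\mathcal F^H_\Lambda(|\psi|)\le\mathcal F^H_\Lambda(\psi)$. The plan is to replace each $\psi_k$ by a symmetric competitor that is no worse, and then to reduce the minimization to a one-dimensional problem whose structure survives weak limits.

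\textbf{Symmetrization step.} For each $k$, let $g_k:=H(\nabla\psi_k)$ and let $\psi_k^\star$ be the gradient rearrangement of Proposition~\ref{prop:existence}. Since $\psi_k\in H^1_0$, the zero extension has no singular part, so $c_0=0$ and $\psi_k^\star\in W^{1,1}_0(\W_R)$ with $H(\nabla\psi_k^\star)=(g_k)_\sharp$.
\begin{itemize}
\item Equimeasurability gives $\int H(\nabla\psi_k^\star)^2=\int H(\nabla\psi_k)^2$; in particular $\psi_k^\star\in H^1_0$.
\item Likewise $|\{H(\nabla\psi_k^\star)\neq0\}|=|\{H(\nabla\psi_k)\neq0\}|$.
\item Theorem~\ref{thm:main_comparison} gives $\int\psi_k\le\int\psi_k^\star$.
\end{itemize}
Hence $\mathcal F^H_\Lambda(\psi_k^\star)\le\mathcal F^H_\Lambda(\psi_k)$. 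Moreover, $\psi_k^\star=\varphi_k(H^\circ)$ with $-\varphi_k'(\rho)=h_k(\rho):=(g_k)_*(\kappa_n\rho^n)$, where $h_k$ is nondecreasing. So, using Proposition~\ref{prop:existence} to express the energy, it suffices to minimize
\[
J(h)=\frac{n\kappa_n}{2}\int_0^R h^2\rho^{n-1}\,d\rho-\kappa_n\int_0^R h(\rho)\rho^n\,d\rho+\Lambda\,\kappa_n\bigl(R^n-r_h^n\bigr)
\]
over nonnegative nondecreasing functions $h$ on $[0,R]$. Here $r_h:=\sup\{\rho: h(\rho)=0\}$, and the second term comes from the Fubini computation in Theorem~\ref{thm:main_comparison}.

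\textbf{Compactness and lower semicontinuity.} Since $\int\psi\le C\|H(\nabla\psi)\|_{L^2}$ by Poincaré, the sequence $h_k$ is bounded in $L^2(\rho^{n-1}d\rho)$. Monotone functions bounded in $L^2$ on $[\varepsilon,R]$ are bounded pointwise on $[0,R-\varepsilon]$, so Helly's theorem yields a subsequence converging pointwise a.e. to a nondecreasing $h\ge0$. Also $r_k:=r_{h_k}\to r$ up to a subsequence. The pieces of $J$ behave as follows.
\begin{itemize}
\item The quadratic term is lower semicontinuous by Fatou.
\item The linear term converges, by weak $L^2$ convergence, since $\rho^{n}\rho^{-(n-1)/2}\in L^2$.
\item For the penalty term, $h_k=0$ on $(0,r_k)$ gives $h=0$ a.e. on $(0,r)$, hence $r_h\ge r$ and $R^n-r_h^n\le\lim(R^n-r_k^n)$.
\end{itemize}
This last point is the crucial one where monotonicity rescues the lack of semicontinuity of $|\{H(\nabla\psi)\ne0\}|$. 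Therefore $J(h)\le\liminf J(h_k)=\inf\mathcal F^H_\Lambda$.

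\textbf{Conclusion.} Set $v(x):=\int_{H^\circ(x)}^R h(\rho)\,d\rho$, which is nonnegative and $H^\circ$-radial. As in the proof of Proposition~\ref{prop:existence}, using $H(\nabla H^\circ)=1$, we get $H(\nabla v)=h(H^\circ(x))$ with $g:=h$ nondecreasing. Furthermore $v\in H^1_0(\W_R)$ by the $L^2$ bound, and $\mathcal F^H_\Lambda(v)=J(h)$. So $v$ is a minimizer of the required form. By monotonicity of $g$, the set $\{g=0\}$ is an interval $[0,r]$, up to its endpoint, which gives $\{H(\nabla v)=0\}=\W_r$ up to null sets.

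\textbf{Main obstacle.} The step requiring care is the symmetrization: justifying that $\psi_k^\star$ lies in $H^1_0$ with $c_0=0$, i.e.\ that the zero extension of an $H^1_0$ function has $D^s=0$. Once this and the equimeasurability of $H(\nabla\psi_k^\star)$ with $H(\nabla\psi_k)$ are in place, the rest is the one-dimensional Helly compactness argument above.
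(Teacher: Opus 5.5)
Your proof is correct, but it gets compactness by a different route from the paper.

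The paper works in $H^1_0(\W_R)$ throughout. It takes the symmetrized minimizing sequence $\psi_k^\star$ and extracts a limit $u$ that is weak in $H^1_0$ and strong in $L^2$. Lower semicontinuity of the Dirichlet term comes from convexity of $H^2$, and the linear term passes to the limit by Rellich. The penalty is handled by a distributional argument showing $\nabla u=0$ on $\W_r$, where $r=\lim r_k$. The weak limit $u$ is only known to be $H^\circ$-radial; its gradient profile is not known to be monotone. So the paper applies the gradient rearrangement a second time (its Step 3, $v=u^\star$) to obtain a minimizer with nondecreasing $H(\nabla v)$.

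You instead pass to the one-dimensional profiles $h_k$ and use Helly's theorem. Monotonicity survives the pointwise limit, Fatou replaces weak lower semicontinuity, and the penalty is controlled directly on the profiles. Your limit $h$ therefore already produces the symmetric minimizer, and no second rearrangement is needed.

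The cost of your route is the explicit bookkeeping of $J$: the Fubini computation of the linear term, and checking $\mathcal F^H_\Lambda(v_h)=J(h)$ for every monotone $h$ with finite energy. Two small corrections in your compactness step:
\begin{itemize}
\item The pointwise bound on $[0,R-\varepsilon]$ comes from the $L^2(\rho^{n-1}d\rho)$ mass on $[R-\varepsilon,R]$, not on $[\varepsilon,R]$.
\item Convergence of the linear term needs the remark that $h_k\rho^{(n-1)/2}$, being bounded in $L^2$ and a.e. convergent, converges weakly to $h\rho^{(n-1)/2}$. This identifies the weak limit.
\end{itemize}

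Both routes rest on the same ingredients from the main section: $c_0=0$ for $H^1_0$ data, equimeasurability of $H(\nabla\psi^\star)$ with $H(\nabla\psi)$, and the main $L^1$ comparison theorem.
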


\begin{proof}
We divide the proof into three steps.

\medskip
\noindent
\textit{Step 1: boundedness from below.}
Since $H$ is a norm, there exist constants $0<\alpha\le \beta$ such that
\[
\alpha |\xi|\le H(\xi)\le \beta |\xi|
\qquad\text{for every }\xi\in\R^n.
\]
Let $C_P=C_P(\W_R)$ be the Poincar\'e constant on $H_0^1(\W_R)$, namely
\[
\|\psi\|_{L^2(\W_R)}\le C_P \|\nabla\psi\|_{L^2(\W_R)}
\qquad\text{for every }\psi\in H_0^1(\W_R).
\]
Hence
\[
\|\psi\|_{L^2(\W_R)}
\le \frac{C_P}{\alpha}\|H(\nabla\psi)\|_{L^2(\W_R)}.
\]
Using H\"older and Young inequalities, we obtain
\begin{align*}
\int_{\W_R}\psi\,dx
&\le |\W_R|^{1/2}\|\psi\|_{L^2(\W_R)}\\
&\le \frac{|\W_R|^{1/2}C_P}{\alpha}\|H(\nabla\psi)\|_{L^2(\W_R)}\\
&\le \frac14\int_{\W_R}H(\nabla\psi)^2\,dx + C,
\end{align*}
for a suitable constant $C=C(n,\alpha,R)>0$.
Therefore
\begin{align*}
\mathcal F^H_\Lambda(\psi)
&=\frac12\int_{\W_R}H(\nabla\psi)^2\,dx-\int_{\W_R}\psi\,dx
+\Lambda |\{H(\nabla\psi)\neq 0\}|\\
&\ge \frac14\int_{\W_R}H(\nabla\psi)^2\,dx - C
\ge -C.
\end{align*}
Thus $\inf_{H_0^1(\W_R)}\mathcal F^H_\Lambda>-\infty$.

\medskip
\noindent
\textit{Step 2: existence of a minimizer.}
Let
\[
m:=\inf_{\psi\in H_0^1(\W_R)} \mathcal F^H_\Lambda(\psi),
\]
and let $\{\psi_k\}\subset H_0^1(\W_R)$ be a minimizing sequence.
Replacing $\psi_k$ with $|\psi_k|$, we may assume $\psi_k\ge 0$.
Let $\psi_k^\star$ be the anisotropic gradient rearrangement of $\psi_k$ on $\W_R$.
By the properties of the rearrangement,
\[
\int_{\W_R}H(\nabla\psi_k^\star)^2\,dx
=
\int_{\W_R}H(\nabla\psi_k)^2\,dx,
\]
\[
\big|\{H(\nabla\psi_k^\star)\neq 0\}\big|
=
\big|\{H(\nabla\psi_k)\neq 0\}\big|,
\]
and
\[
\int_{\W_R}\psi_k\,dx\le \int_{\W_R}\psi_k^\star\,dx.
\]
Hence
\[
\mathcal F^H_\Lambda(\psi_k^\star)\le \mathcal F^H_\Lambda(\psi_k),
\]
so $\{\psi_k^\star\}$ is still a minimizing sequence.

Applying the estimate from Step 1 to $\psi_k^\star$, we get
\[
\frac14\int_{\W_R}H(\nabla\psi_k^\star)^2\,dx - C
\le \mathcal F^H_\Lambda(\psi_k^\star)\le m+1
\]
for $k$ large enough. Therefore $\{\psi_k^\star\}$ is bounded in $H_0^1(\W_R)$.
Up to a subsequence, there exists $u\in H_0^1(\W_R)$ such that
\[
\psi_k^\star \rightharpoonup u \quad\text{weakly in }H_0^1(\W_R),
\qquad
\psi_k^\star \to u \quad\text{strongly in }L^2(\W_R).
\]
Since each $\psi_k^\star$ is $H^\circ$-radial, also $u$ is $H^\circ$-radial.

By convexity of $\xi\mapsto H(\xi)^2$,
\[
\frac12\int_{\W_R}H(\nabla u)^2\,dx
\le
\liminf_{k\to\infty}
\frac12\int_{\W_R}H(\nabla\psi_k^\star)^2\,dx.
\]
Moreover, strong $L^2$ convergence implies
\[
\int_{\W_R}\psi_k^\star\,dx \to \int_{\W_R}u\,dx.
\]

It remains to deal with the penalization term.
By construction, for each $k$ the function $H(\nabla\psi_k^\star)$ is $H^\circ$-radial and radially nondecreasing.
Hence there exists $r_k\in[0,R]$ such that, up to null sets,
\[
\{H(\nabla\psi_k^\star)=0\}=\W_{r_k},
\qquad
\{H(\nabla\psi_k^\star)\neq 0\}=\W_R\setminus \W_{r_k}.
\]
Up to a further subsequence, we may assume $r_k\to r\in[0,R]$.
Then
\[
\big|\{H(\nabla\psi_k^\star)\neq 0\}\big|
=
|\W_R\setminus \W_{r_k}|
\to
|\W_R\setminus \W_r|.
\]

We claim that $H(\nabla u)=0$ a.e. in $\W_r$.
Fix $\varepsilon>0$ such that $r-\varepsilon>0$.
For $k$ sufficiently large, $r_k>r-\varepsilon$, hence
\[
H(\nabla\psi_k^\star)=0
\qquad\text{a.e. in }\W_{r-\varepsilon}.
\]
Since $H(\xi)=0$ if and only if $\xi=0$, this means $\nabla\psi_k^\star=0$ a.e. in $\W_{r-\varepsilon}$.
Therefore, for every $\Phi\in C_c^\infty(\W_{r-\varepsilon};\R^n)$,
\[
\int_{\W_R}\psi_k^\star\,{\rm div}(\Phi)\,dx
=
-\int_{\W_R}\nabla\psi_k^\star\cdot \Phi\,dx
=0.
\]
Passing to the limit, using the strong $L^2$ convergence of $\psi_k^\star$ to $u$, we obtain
\[
\int_{\W_R}u\,{\rm div}(\Phi)\,dx=0
\qquad\text{for every }\Phi\in C_c^\infty(\W_{r-\varepsilon};\R^n).
\]
Hence $\nabla u=0$ in the distributional sense in $\W_{r-\varepsilon}$, and thus a.e. in $\W_{r-\varepsilon}$.
Since $\varepsilon>0$ is arbitrary, we conclude that
\[
\nabla u=0 \qquad\text{a.e. in }\W_r,
\]
and therefore
\[
H(\nabla u)=0 \qquad\text{a.e. in }\W_r.
\]
It follows that
\[
\{H(\nabla u)\neq 0\}\subseteq \W_R\setminus \W_r
\qquad\text{up to null sets,}
\]
hence
\[
\big|\{H(\nabla u)\neq 0\}\big|
\le |\W_R\setminus \W_r|
=
\lim_{k\to\infty}\big|\{H(\nabla\psi_k^\star)\neq 0\}\big|.
\]

Collecting the previous inequalities, we get
\[
\mathcal F^H_\Lambda(u)
\le \liminf_{k\to\infty}\mathcal F^H_\Lambda(\psi_k^\star)
= m,
\]
so $u$ is a minimizer.

\medskip
\noindent
\textit{Step 3: existence of a symmetric minimizer.}
Let $v:=u^\star$ be the anisotropic gradient rearrangement of the minimizer $u$.
Then, by the rearrangement properties,
\[
\int_{\W_R}H(\nabla v)^2\,dx=\int_{\W_R}H(\nabla u)^2\,dx,
\qquad
\big|\{H(\nabla v)\neq 0\}\big|=\big|\{H(\nabla u)\neq 0\}\big|,
\]
and
\[
\int_{\W_R}u\,dx\le \int_{\W_R}v\,dx.
\]
Therefore
\[
\mathcal F^H_\Lambda(v)\le \mathcal F^H_\Lambda(u)=m.
\]
Since $u$ is a minimizer, also $v$ is a minimizer.

By construction, $v$ is nonnegative, $H^\circ$-radial, and $H(\nabla v)$ is $H^\circ$-radial and radially nondecreasing.
Hence there exists $r\in[0,R]$ such that, up to null sets,
\[
\{H(\nabla v)=0\}=\W_r,
\qquad
\{H(\nabla v)\neq 0\}=\W_R\setminus \W_r.
\]
This concludes the proof.
\end{proof}

\begin{remark}\label{rem:conditional_uniqueness}
The previous proposition gives existence of a symmetric minimizer on a Wulff ball,
which is all that is needed for the Saint--Venant inequality below. We do not
claim uniqueness here. Any uniqueness or rigidity statement would require an additional
anisotropic analogue of the Euclidean rigidity theorem used in \cite{AGNT24}, and this
goes beyond the results proved in the present paper.
 In the anisotropic
setting, one expects the corresponding extremal level sets to be homothetic Wulff
shapes, but a complete rigidity statement would require additional hypotheses and
will not be pursued here.
\end{remark}

\begin{corollary}[Saint--Venant inequality for $T_{\mathcal F}^H$]\label{cor:SV_TF_aniso}
Let $\Omega$ be a bounded open set with finite perimeter and let $\Omega^\star$ be the centered Wulff ball with $|\Omega^\star|=|\Omega|$.
Then for every $\Lambda>0$,
\[
T_{\mathcal F}^H(\Omega,\Lambda)\le T_{\mathcal F}^H(\Omega^\star,\Lambda).
\]
\end{corollary}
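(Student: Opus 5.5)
The plan is to show that, for every admissible $\psi$, the value $\mathcal F^H_\Lambda(\psi)$ is bounded below by $\mathcal F^H_\Lambda$ of a competitor on $\Omega^\star$. Since $\mathcal F^H_\Lambda(|\psi|)\le\mathcal F^H_\Lambda(\psi)$, it suffices to treat a non-negative $\psi\in H_0^1(\Omega)$, which we extend by zero to $\R^n$. The key observation is that $\psi\in H^1_0(\Omega)$ extended by zero belongs to $W^{1,1}(\R^n)$. Hence $\psi\in\BV_0(\Omega)$ with $D^s\psi=0$, so the constant $c_0$ in \eqref{eq:boundary_constant} vanishes. The anisotropic gradient rearrangement $\psi^\star$ given by \eqref{eq:ustar_problem} and Proposition~\ref{prop:existence} is therefore $H^\circ$-radial and vanishes on $\partial\Omega^\star$.

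Next we check that $\psi^\star$ is an admissible competitor and that the three terms compare correctly. By construction $H(\nabla\psi^\star)=g_\sharp$ with $g=H(\nabla\psi)$, and $g_\sharp$ is equimeasurable with $g$. This gives
\[
\int_{\Omega^\star}H(\nabla\psi^\star)^2\,dx=\int_\Omega H(\nabla\psi)^2\,dx,
\qquad
\big|\{H(\nabla\psi^\star)\neq0\}\big|=\big|\{H(\nabla\psi)\neq0\}\big|.
\]
In particular $\nabla\psi^\star\in L^2$ by \eqref{control}. Together with the zero trace and the explicit radial formula, this gives $\psi^\star\in H_0^1(\Omega^\star)$. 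To verify this, note that $\phi$ is absolutely continuous on $(0,R]$ with $\phi(R)=0$. It is also bounded near the origin, either from $\psi^\star\in L^\infty$ as required in \eqref{eq:ustar_problem}, or because an $H^1$ radial profile in $n\ge2$ only matters up to capacity zero. Theorem~\ref{thm:main_comparison} then yields $\int_\Omega\psi\le\int_{\Omega^\star}\psi^\star$, because $\psi^\star$ has zero singular-part datum.

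Combining these facts gives $\mathcal F^H_\Lambda(\psi^\star)\le\mathcal F^H_\Lambda(\psi)$, hence
\[
\inf_{H^1_0(\Omega^\star)}\mathcal F^H_\Lambda\le\mathcal F^H_\Lambda(\psi).
\]
Taking the infimum over $\psi$ and changing sign gives $T^H_{\mathcal F}(\Omega,\Lambda)\le T^H_{\mathcal F}(\Omega^\star,\Lambda)$. Proposition~\ref{prop:TF_aniso_wulff_exist_unique} guarantees that the right-hand side is attained, although this is not needed for the inequality.

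I expect the main obstacle to be the admissibility of $\psi^\star$ in $H_0^1(\Omega^\star)$, namely its integrability and behaviour at the centre and its zero trace. This is where $D^s\psi=0$ for the zero extension of an $H^1_0$ function is crucial. For a general open $\Omega$ of finite perimeter, one uses that $H^1_0(\Omega)$ is the closure of $C_c^\infty(\Omega)$, so the zero extension lies in $W^{1,1}(\R^n)$ with no jump across $\partial\Omega$. Since $|\{g_\sharp\ne0\}|$ is preserved, no further difficulty arises.
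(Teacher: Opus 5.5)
Your proposal is correct and follows the paper's proof. The steps are the same: you reduce to $\psi\ge 0$, note that the zero extension of an $H^1_0$ function has $D^s\psi=0$ so the boundary constant $c_0$ vanishes, use the equimeasurability of $H(\nabla\psi^\star)$ and $H(\nabla\psi)$ for the Dirichlet and penalization terms, and invoke the main $L^1$ comparison theorem for the linear term. Your extra check that $\psi^\star\in H^1_0(\Omega^\star)$ fills in a step the paper only asserts, and boundedness at the centre follows most directly, without any capacity argument, from $g_*$ being nondecreasing: it gives $\phi(0)=\int_0^{|\Omega|}g_*(t)\,\bigl(n\kappa_n^{1/n}t^{1-1/n}\bigr)^{-1}\,dt<\infty$.
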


\begin{proof}
Fix $\psi\in H_0^1(\Omega)$, $\psi\ge 0$.
Let $\psi^\star\in H_0^1(\Omega^\star)$ be the anisotropic gradient rearrangement of $\psi$ (in the Sobolev case the boundary constant is $0$).
Then $H(\nabla\psi^\star)$ is equidistributed with $H(\nabla\psi)$, hence
\[
\int_{\Omega^\star}H(\nabla\psi^\star)^2\,dx=\int_{\Omega}H(\nabla\psi)^2\,dx,
\qquad
\big|\{H(\nabla\psi^\star)\neq 0\}\big|=\big|\{H(\nabla\psi)\neq 0\}\big|.
\]
Moreover, by the main $L^1$ comparison theorem for the anisotropic gradient rearrangement,
\[
\int_{\Omega}\psi\,dx\le \int_{\Omega^\star}\psi^\star\,dx.
\]
Therefore $\mathcal F^H_\Lambda(\psi)\ge \mathcal F^H_\Lambda(\psi^\star)$.
Taking the infimum over $\psi\in H_0^1(\Omega)$ yields the claim.
\end{proof}


\subsection{An anisotropic insulation functional and a second torsional rigidity}

Let $\Omega\subset\R^n$ be a bounded Lipschitz domain and let $m>0$. For
$\psi\in H^1(\Omega)\setminus\{0\}$ define
\begin{equation}\label{eq:G_aniso_def}
\mathcal G_m^H(\psi):=
\frac{\displaystyle \int_\Omega H(\nabla\psi)^2\,dx
+\frac1m\left(\int_{\partial\Omega}|\psi|\,d\mathcal H_H^{n-1}\right)^2}
{\left(\displaystyle \int_\Omega |\psi|\,dx\right)^2}.
\end{equation}
We then set
\begin{equation}\label{eq:TG_aniso_def}
\frac1{T_{\mathcal G}^H(\Omega,m)}
:=\inf_{\psi\in H^1(\Omega)\setminus\{0\}} \mathcal G_m^H(\psi).
\end{equation}

The functional $\mathcal G_m^H$ is related to the optimal insulation problem.
For our purposes, the relevant point is that it can be compared through the anisotropic
gradient rearrangement introduced in Section~\ref{sec:main_result}.

\medskip

The next lemma identifies the boundary term in \eqref{eq:G_aniso_def} with the singular
part of the anisotropic variation of the zero extension.

\begin{lemma}[Zero extension and anisotropic jump part]
\label{lem:zero_extension_jump}
Let $\Omega\subset\R^n$ be a bounded Lipschitz domain and let $\psi\in H^1(\Omega)$
with $\psi\ge 0$. Let $\tilde\psi$ be the extension of $\psi$ by $0$ outside $\Omega$.
Then $\tilde\psi\in BV(\R^n)$, $\tilde\psi=0$ a.e. in $\R^n\setminus\Omega$, and
\[
D^a\tilde\psi=\nabla\psi\,\mathcal L^n\llcorner\Omega,
\qquad
|D^s\tilde\psi|_H(\R^n)=\int_{\partial\Omega}\psi\,d\mathcal H_H^{n-1}.
\]
\end{lemma}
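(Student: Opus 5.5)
We compute the distributional derivative of $\tilde\psi$ directly from the Gauss--Green formula for Sobolev functions on Lipschitz domains. Since $\Omega$ is bounded and Lipschitz, the trace operator $H^1(\Omega)\to L^2(\partial\Omega)\subset L^1(\partial\Omega,\mathcal H^{n-1})$ is well defined and continuous, and $\psi\in W^{1,1}(\Omega)$ because $\Omega$ is bounded. For every $\zeta\in C^1_c(\R^n,\R^n)$ the divergence theorem gives
\[
\int_{\R^n}\tilde\psi\,\divg\zeta\,dx=\int_\Omega\psi\,\divg\zeta\,dx=-\int_\Omega\nabla\psi\cdot\zeta\,dx+\int_{\partial\Omega}\psi\,\zeta\cdot\nu_\Omega\,d\mathcal H^{n-1}.
\]
We first prove it for $\psi\in C^\infty(\overline\Omega)$, and then extend it by density of $C^\infty(\overline\Omega)$ in $H^1(\Omega)$ together with continuity of the trace. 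Hence
\[
D\tilde\psi=\nabla\psi\,\mathcal L^n\llcorner\Omega-\psi\,\nu_\Omega\,\mathcal H^{n-1}\llcorner\partial\Omega,
\]
which is a finite vector measure. Therefore $\tilde\psi\in BV(\R^n)$, and $\tilde\psi=0$ outside $\Omega$ by construction.

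Next we identify the Lebesgue decomposition. The first term is absolutely continuous with respect to $\mathcal L^n$. The second term is concentrated on $\partial\Omega$, which has finite $\mathcal H^{n-1}$ measure (Lipschitz boundary) and hence zero Lebesgue measure, so it is singular. By uniqueness of the Lebesgue decomposition, $D^a\tilde\psi=\nabla\psi\,\mathcal L^n\llcorner\Omega$ and $D^s\tilde\psi=-\psi\,\nu_\Omega\,\mathcal H^{n-1}\llcorner\partial\Omega$.

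Finally we compute the anisotropic total variation of the singular part. Its total variation measure is $|D^s\tilde\psi|=\psi\,\mathcal H^{n-1}\llcorner\partial\Omega$, using $\psi\ge0$ and $|\nu_\Omega|=1$. Its polar density is $-\nu_\Omega$ on $\{\psi>0\}$. By the integral representation \eqref{eq:aniso_TV_integral} and the evenness $H(-\nu)=H(\nu)$,
\[
|D^s\tilde\psi|_H(\R^n)=\int_{\partial\Omega}H(-\nu_\Omega)\,\psi\,d\mathcal H^{n-1}=\int_{\partial\Omega}\psi\,d\mathcal H^{n-1}_H .
\]

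The only delicate point is justifying the Gauss--Green formula with traces for $H^1$ functions on a merely Lipschitz domain. This is standard: for Lipschitz $\Omega$, $\nu_\Omega$ exists $\mathcal H^{n-1}$-a.e. and the density/trace argument above applies. Nonnegativity of $\psi$ is used only to write $|\psi|=\psi$ in the final formula.
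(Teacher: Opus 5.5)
Your proof is correct and follows essentially the same route as the paper: Gauss--Green with traces to get $D\tilde\psi=\nabla\psi\,\mathcal L^n\llcorner\Omega-\psi\,\nu_\Omega\,\mathcal H^{n-1}\llcorner\partial\Omega$, then reading off the absolutely continuous and singular parts and integrating $H(\nu_\Omega)$ against $\psi\,\mathcal H^{n-1}\llcorner\partial\Omega$. You additionally spell out points the paper leaves implicit: the density/trace justification, that $|\partial\Omega|=0$ with uniqueness of the Lebesgue decomposition, and the evenness $H(-\nu_\Omega)=H(\nu_\Omega)$.
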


\begin{proof}
Let $\Phi\in C_c^1(\R^n;\R^n)$. Since $\tilde\psi=\psi$ in $\Omega$ and
$\tilde\psi=0$ in $\R^n\setminus\Omega$,
\[
\int_{\R^n}\tilde\psi\,\divg\Phi\,dx=\int_\Omega \psi\,\divg\Phi\,dx.
\]
Because $\Omega$ is Lipschitz and $\psi\in H^1(\Omega)$, the trace
$\tr_{\partial\Omega}\psi\in L^1(\partial\Omega)$ exists and the Gauss--Green formula gives
\[
\int_\Omega \psi\,\divg\Phi\,dx
=
-\int_\Omega \nabla\psi\cdot\Phi\,dx
+\int_{\partial\Omega}(\tr_{\partial\Omega}\psi)\,\Phi\cdot\nu_\Omega\,d\mathcal H^{n-1}.
\]
Hence, in the sense of vector measures,
\[
D\tilde\psi
=
\nabla\psi\,\mathcal L^n\llcorner\Omega
-(\tr_{\partial\Omega}\psi)\,\nu_\Omega\,\mathcal H^{n-1}\llcorner\partial\Omega.
\]
Therefore
\[
D^a\tilde\psi=\nabla\psi\,\mathcal L^n\llcorner\Omega,
\qquad
D^s\tilde\psi
=-(\tr_{\partial\Omega}\psi)\,\nu_\Omega\,\mathcal H^{n-1}\llcorner\partial\Omega.
\]
Since $\psi\ge 0$, we obtain
\[
|D^s\tilde\psi|_H(\R^n)
=
\int_{\partial\Omega}H(\nu_\Omega)\,\psi\,d\mathcal H^{n-1}
=
\int_{\partial\Omega}\psi\,d\mathcal H_H^{n-1},
\]
as claimed.
\end{proof}

\medskip

The next proposition we prove the three main results  needed in the proof of the
Saint--Venant inequality: preservation of the Dirichlet term, preservation of the
boundary term, and improvement of the $L^1$ denominator.

\begin{proposition}[Rearranged competitors for $\mathcal G_m^H$]
\label{prop:rearranged_competitor_G}
Let $\Omega\subset\R^n$ be a bounded Lipschitz domain and let $\Omega^\star$ be the
centered Wulff ball with $|\Omega^\star|=|\Omega|$. Let $\psi\in H^1(\Omega)$, $\psi\ge 0$,
let $\tilde\psi$ be its zero extension, and let $\psi^\star$ be the restriction to
$\Omega^\star$ of the anisotropic gradient rearrangement of $\tilde\psi$.
Then $\psi^\star\in H^1(\Omega^\star)$ and
\begin{align}
\int_{\Omega^\star} H(\nabla\psi^\star)^2\,dx
&=
\int_\Omega H(\nabla\psi)^2\,dx,
\label{eq:G_prop_energy}
\\[1mm]
\int_{\partial\Omega^\star}\psi^\star\,d\mathcal H_H^{n-1}
&=
\int_{\partial\Omega}\psi\,d\mathcal H_H^{n-1},
\label{eq:G_prop_boundary}
\\[1mm]
\int_\Omega \psi\,dx
&\le
\int_{\Omega^\star}\psi^\star\,dx.
\label{eq:G_prop_L1}
\end{align}
\end{proposition}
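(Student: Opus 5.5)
The plan is to apply the construction \eqref{eq:ustar_problem} to $u:=\tilde\psi$ and read off the three properties from the explicit formula of Proposition~\ref{prop:existence}, together with Lemma~\ref{lem:zero_extension_jump} and Theorem~\ref{thm:main_comparison}.

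\textbf{Step 1: admissibility and structure of $\tilde\psi$.} Since $\Omega$ is a bounded Lipschitz domain, it is open, bounded and of finite perimeter. By Lemma~\ref{lem:zero_extension_jump}, $\tilde\psi\in BV(\R^n)$ is non-negative and vanishes outside $\Omega$, so $\tilde\psi\in\BV_0(\Omega)$. The same lemma gives
\[
D^a\tilde\psi=\nabla\psi\,\mathcal L^n\llcorner\Omega, \qquad |D^s\tilde\psi|_H(\R^n)=\int_{\partial\Omega}\psi\,d\mathcal H^{n-1}_H .
\]
Hence $g=H(\nabla^a\tilde\psi)=H(\nabla\psi)$ on $\Omega$, and the boundary constant in \eqref{eq:boundary_constant} is
\[
c_0=\frac{\int_{\partial\Omega}\psi\,d\mathcal H^{n-1}_H}{P_H(\Omega^\star)} .
\]

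\textbf{Step 2: energy identity \eqref{eq:G_prop_energy} and $\psi^\star\in H^1(\Omega^\star)$.} By \eqref{eq:ustar_problem}, $H(\nabla\psi^\star)(x)=g_*(\kappa_n H^\circ(x)^n)$ a.e. in $\Omega^\star$. The map $x\mapsto\kappa_n H^\circ(x)^n$ pushes Lebesgue measure on $\Omega^\star$ forward to Lebesgue measure on $(0,|\Omega|)$, so $H(\nabla\psi^\star)$ is equimeasurable with $H(\nabla\psi)$. Taking squares,
\[
\int_{\Omega^\star}H(\nabla\psi^\star)^2\,dx=\int_0^{|\Omega|}g_*^2\,dt=\int_\Omega H(\nabla\psi)^2\,dx<\infty .
\]
By \eqref{control}, $\nabla\psi^\star\in L^2$. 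In addition, $\psi^\star\in L^\infty$ by construction, since it lies in $W^{1,1}\cap L^\infty$ with the explicit formula. Hence $\psi^\star\in H^1(\Omega^\star)$.

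The step I expect to need most care is making sure the explicit formula \eqref{eq:ustar_rearrangement_explicit} is finite near $s=0$, i.e. that $\psi^\star\in L^\infty$. This follows from Cauchy--Schwarz: $t^{-1+1/n}\in L^2(0,|\Omega|)$ when $n\ge 2$ (since $2-2/n>1$ fails only for $n=1$, where the weight is $1$). Alternatively, one only needs $\psi^\star\in L^2$, which follows from the Sobolev structure on $\Omega^\star$ anyway. I regard this as the main technical point; the rest is bookkeeping.

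\textbf{Step 3: boundary identity \eqref{eq:G_prop_boundary}.} The function $\psi^\star$ is given by the continuous $H^\circ$-radial formula in Proposition~\ref{prop:existence}, whose value at $H^\circ(x)=R$ is exactly $c_0$. Hence its trace on $\partial\Omega^\star$ is the constant $c_0$, and
\[
\int_{\partial\Omega^\star}\psi^\star\,d\mathcal H^{n-1}_H=c_0\int_{\partial\Omega^\star}H(\nu_{\Omega^\star})\,d\mathcal H^{n-1}=c_0\,P_H(\Omega^\star)=\int_{\partial\Omega}\psi\,d\mathcal H^{n-1}_H .
\]

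\textbf{Step 4: $L^1$ inequality \eqref{eq:G_prop_L1}.} Apply Theorem~\ref{thm:main_comparison} to $u=\tilde\psi$. This gives
\[
\int_\Omega\psi\,dx=\|\tilde\psi\|_{L^1(\Omega)}\le\|\psi^\star\|_{L^1(\Omega^\star)}=\int_{\Omega^\star}\psi^\star\,dx,
\]
where the last equality uses $\psi^\star\ge c_0\ge 0$.
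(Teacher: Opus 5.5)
Your four steps follow the paper's proof essentially line by line: Lemma~\ref{lem:zero_extension_jump} identifies $g=H(\nabla\psi)$ and $c_0$; equimeasurability plus \eqref{control} give \eqref{eq:G_prop_energy} and $\nabla\psi^\star\in L^2$; the identity $c_0P_H(\Omega^\star)=|D^s\tilde\psi|_H(\R^n)$ gives \eqref{eq:G_prop_boundary}; Theorem~\ref{thm:main_comparison} gives \eqref{eq:G_prop_L1}. Your pushforward argument for equimeasurability is more explicit than the paper's ``by construction''.

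The step that does not work is the one you flag as the main technical point. The square $(t^{-1+1/n})^2=t^{-2+2/n}$ is integrable at $0$ iff $2-2/n<1$, i.e.\ iff $n<2$. So the weight $t^{-1+1/n}$ belongs to $L^2(0,|\Omega|)$ \emph{only} for $n=1$; for $n=2$ its square is $t^{-1}$. You have the integrability criterion reversed.

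Cauchy--Schwarz with $g_*\in L^2$ therefore yields nothing, and square integrability alone is genuinely insufficient. For $n=2$ and $f(t)=t^{-1/2}/\log(1/t)$ near $0$, one has $f\in L^2$ but $f\,t^{-1/2}\notin L^1$.

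What makes \eqref{eq:ustar_rearrangement_explicit} finite at $s=0$ is the monotonicity of $g_*$. Since $g_*$ is nondecreasing, for $t\le|\Omega|/2$ you get $g_*(t)\le g^*(|\Omega|/2)\le 2\|g\|_{L^1(\Omega)}/|\Omega|$. Splitting the integral at $|\Omega|/2$ then gives
\[
\int_0^{|\Omega|}g_*(t)\,t^{-1+\frac1n}\,dt\le \frac{2\|g\|_{L^1(\Omega)}}{|\Omega|}\,n\Bigl(\frac{|\Omega|}{2}\Bigr)^{\frac1n}+\Bigl(\frac{|\Omega|}{2}\Bigr)^{-1+\frac1n}\|g\|_{L^1(\Omega)}<\infty .
\]
Chebyshev's integral inequality, applied to the nondecreasing $g_*$ and the nonincreasing weight, even gives $\sup\psi^\star\le c_0+\|H(\nabla\psi)\|_{L^1(\Omega)}/(\kappa_n^{1/n}|\Omega|^{1-1/n})$. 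With this in place, $\psi^\star\in L^\infty\subset L^2$ and your proof is complete.

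Your fallback also works if stated precisely: a function in $W^{1,1}(\Omega^\star)$ whose gradient lies in $L^2$ on the bounded Lipschitz set $\Omega^\star$ belongs to $H^1(\Omega^\star)$. For comparison, the paper simply asserts $\psi^\star\in L^2(\Omega^\star)$, relying on the $L^\infty$ requirement built into \eqref{eq:ustar_problem}.
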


\begin{proof}
By Lemma~\ref{lem:zero_extension_jump}, $\tilde\psi\in BV(\R^n)$, $\tilde\psi=0$ a.e. in
$\R^n\setminus\Omega$, and
\[
D^a\tilde\psi=\nabla\psi\,\mathcal L^n\llcorner\Omega,
\qquad
|D^s\tilde\psi|_H(\R^n)=\int_{\partial\Omega}\psi\,d\mathcal H_H^{n-1}.
\]
Let $\psi^\star$ be the anisotropic gradient rearrangement of $\tilde\psi$ on $\Omega^\star$.
By construction, $H(\nabla\psi^\star)$ is equidistributed with
$H(\nabla\psi)\chi_\Omega$. 
We have that 
\[
H(\xi)\ge \alpha|\xi|
\qquad\text{for every }\xi\in\R^n .
\]
Hence
\[
\|\nabla\psi^\star\|_{L^2(\Omega^\star)}
\le \alpha^{-1}\|H(\nabla\psi^\star)\|_{L^2(\Omega^\star)}<\infty.
\]
Since also \(\psi^\star\in L^2(\Omega^\star)\), we conclude that
\[
\psi^\star\in H^1(\Omega^\star).
\]
Moreover, equidistribution gives \eqref{eq:G_prop_energy}.

By the definition of the rearrangement in the BV setting, $\psi^\star$ is
$H^\circ$-radial and constant on $\partial\Omega^\star$, say
\[
\psi^\star=c
\qquad \text{on }\partial\Omega^\star,
\]
with
\[
c=\frac{|D^s\tilde\psi|_H(\R^n)}{P_H(\Omega^\star)}.
\]
Therefore
\[
\int_{\partial\Omega^\star}\psi^\star\,d\mathcal H_H^{n-1}
=
c\,P_H(\Omega^\star)
=
|D^s\tilde\psi|_H(\R^n)
=
\int_{\partial\Omega}\psi\,d\mathcal H_H^{n-1},
\]
which proves \eqref{eq:G_prop_boundary}.

Finally, \eqref{eq:G_prop_L1} follows from Theorem~\ref{thm:main_comparison} applied to
$\tilde\psi$, namely
\[
\int_\Omega \psi\,dx
=
\int_{\R^n}\tilde\psi\,dx
\le
\int_{\Omega^\star}\psi^\star\,dx.
\]
\end{proof}

\begin{corollary}[Saint--Venant inequality for $T_{\mathcal G}^H$]
\label{cor:SV_TG_aniso}
Let $\Omega\subset\R^n$ be a bounded Lipschitz domain and let $\Omega^\star$ be the
centered Wulff ball with $|\Omega^\star|=|\Omega|$. Then for every $m>0$,
\[
T_{\mathcal G}^H(\Omega,m)\le T_{\mathcal G}^H(\Omega^\star,m).
\]
\end{corollary}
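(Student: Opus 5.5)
To prove Corollary~\ref{cor:SV_TG_aniso} it suffices to show that for every admissible $\psi\in H^1(\Omega)\setminus\{0\}$ there is a competitor $\psi^\star\in H^1(\Omega^\star)\setminus\{0\}$ with $\mathcal G_m^H(\psi^\star)\le \mathcal G_m^H(\psi)$. Taking the infimum over $\psi$ then gives $1/T_{\mathcal G}^H(\Omega^\star,m)\le 1/T_{\mathcal G}^H(\Omega,m)$, which is the claim.

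The first step is a reduction to nonnegative functions. Since $|\nabla|\psi||=|\nabla\psi|$ a.e. and $H$ is even, $H(\nabla|\psi|)=H(\nabla\psi)$ a.e. The traces satisfy $\tr|\psi|=|\tr\psi|$, and the $L^1$ norm is unchanged. Hence $\mathcal G_m^H(|\psi|)=\mathcal G_m^H(\psi)$, and we may assume $\psi\ge 0$, $\psi\not\equiv 0$.

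Next, for such $\psi$ we take the zero extension $\tilde\psi$, which lies in $BV_0(\Omega)$ by Lemma~\ref{lem:zero_extension_jump}. Let $\psi^\star$ be its anisotropic gradient rearrangement restricted to $\Omega^\star$. Proposition~\ref{prop:rearranged_competitor_G} then gives four facts:
\begin{itemize}
\item $\psi^\star\in H^1(\Omega^\star)$;
\item the Dirichlet term is preserved, by \eqref{eq:G_prop_energy};
\item the anisotropic boundary integral is preserved, by \eqref{eq:G_prop_boundary};
\item $\int_\Omega\psi\,dx\le\int_{\Omega^\star}\psi^\star\,dx$, by \eqref{eq:G_prop_L1}.
\end{itemize}
Since $\psi^\star\ge0$, we have $|\psi^\star|=\psi^\star$ and the boundary integral in $\mathcal G_m^H(\psi^\star)$ is exactly the one controlled by \eqref{eq:G_prop_boundary}. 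Because $\int_\Omega\psi>0$, the denominator only grows, so $\psi^\star\neq0$ and
\[
\mathcal G_m^H(\psi^\star)\le \mathcal G_m^H(\psi).
\]

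The only point needing care is the boundary term of $\psi^\star$ on $\partial\Omega^\star$. The proposition computes it as $c\,P_H(\Omega^\star)$, so the trace of $\psi^\star\in H^1(\Omega^\star)$ must be the constant $c_0$ of \eqref{eq:boundary_constant}. This follows from the explicit formula in Proposition~\ref{prop:existence}: the profile $\phi$ is absolutely continuous on $(0,R]$ with $\phi(R)=c_0$, so the Sobolev trace coincides with $c_0$. Also $\Omega^\star$ is Lipschitz, indeed $C^2$, so $\mathcal G_m^H$ is well defined there.
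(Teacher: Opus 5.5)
Your proposal is correct and is essentially the paper's proof: reduce to $\psi\ge 0$, take the rearranged competitor of Proposition~\ref{prop:rearranged_competitor_G}, combine \eqref{eq:G_prop_energy}--\eqref{eq:G_prop_L1} to get $\mathcal G_m^H(\psi^\star)\le\mathcal G_m^H(\psi)$, and take the infimum. Your extra checks (that $\psi^\star\neq 0$ and that its trace is the constant $c_0$) are fine, with two small corrections: $H(\nabla|\psi|)=H(\nabla\psi)$ follows from $\nabla|\psi|=\mathrm{sgn}(\psi)\nabla\psi$ together with the evenness of $H$ (equal Euclidean lengths alone would not suffice), and the standing hypotheses make $\Omega^\star$ convex, hence Lipschitz, but not necessarily $C^2$.
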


\begin{proof}
Let $\psi\in H^1(\Omega)\setminus\{0\}$. Replacing $\psi$ with $|\psi|$, we may assume
that $\psi\ge 0$, because $|\psi|\in H^1(\Omega)$,
\[
H(\nabla|\psi|)=H(\nabla\psi)\qquad\text{a.e. in }\Omega,
\]
and the other terms in \eqref{eq:G_aniso_def} are unchanged.

Let $\psi^\star\in H^1(\Omega^\star)$ be given by
Proposition~\ref{prop:rearranged_competitor_G}. By
\eqref{eq:G_prop_energy}--\eqref{eq:G_prop_L1}, we have
\[
\mathcal G_m^H(\psi)\ge \mathcal G_m^H(\psi^\star).
\]
Taking the infimum over $\psi\in H^1(\Omega)\setminus\{0\}$ yields
\[
\frac1{T_{\mathcal G}^H(\Omega,m)}
\ge
\frac1{T_{\mathcal G}^H(\Omega^\star,m)},
\]
which is equivalent to the conclusion.
\end{proof}

\section*{Acknowledgments}
The first author G.P. is partially supported by Gruppo Nazionale per l’Analisi Matematica, la Probabilità e le loro Applicazioni
(GNAMPA) of Istituto Nazionale di Alta Matematica (INdAM) and  by INdAM - GNAMPA Project "Disuguaglianze funzionali di tipo Geometrico e Spettrale", CUP E5324001950001.

 The second author Y.Y. is supported by the Program of China Scholarship Council (Grant No. 202506030179).

\bibliographystyle{plain}
\bibliography{ref}

\Addresses

\end{document}